\theoremstyle{plain}
\newtheorem{thm}{Theorem}
\newtheorem{lem}{Lemma}[section]
\newtheorem{prop}[lem]{Proposition}
\theoremstyle{definition}
\newtheorem{definition}[lem]{Definition}
\newcommand\N{\mathbf N}
\newcommand\Z{\mathbf Z}
\newcommand\pt[2]{(#1,#2)}
\newcommand\LGV{Lindstr\"om-Gessel-Viennot}
\newcommand\Sym{\mathcal S}
\newcommand\sg{\mathop\mathrm{sg}}
\newcommand\supp{\mathop\mathrm{supp}}
\newcommand\Pathfam{\mathop\mathrm{Pathfam}}
\let\thru=\cap
\newcommand\transp[1]{#1^\top}
\newcommand\ip{i+1\binoppenalty10000 }
\newcommand\imn{i-1\binoppenalty10000 }
\newcommand\kp{k+1\binoppenalty10000 }
\newcommand\disj[1]{\textit{untangle}(#1,k)}
\newcommand\clify[1]{\textit{cliffify}(#1,k)}
\newcommand\setof[2]{\{\,#1\mid#2\,\}}
\newcommand\union{\cup}
\newcommand\smallmat[4]{\bigl(\genfrac{}{}{0pt }1{#1~#2}{#3~#4}\bigr)}
\newcommand\FROM{\textbf{ from }}
\newcommand\UPTO{\textbf{ to }} 
\newcommand\DOWNTO{\textbf{ down to }}
\begin{document}
\title[An Aztec diamond bijection by combing lattice paths]
  {A bijection proving the Aztec diamond theorem by combing lattice paths}
\author{Frédéric Bosio \and Marc van Leeuwen}
\address{Université~de~Poitiers,~Mathématiques,
	11~Boulevard~Marie~et~Pierre~Curie
	BP~30179
	86962~Futuroscope~Chasseneuil~Cedex
	France}

\email{Frederic.Bosio@math.univ-poitiers.fr}
\email{Marc.van-Leeuwen@math.univ-poitiers.fr}

\begin{abstract}
  We give a bijective proof of the Aztec diamond theorem, stating that there
  are $2^{n(n+1)/2}$ domino tilings of the Aztec diamond of order~$n$. The
  proof in fact establishes a similar result for non-intersecting families of
  $n+1$ Schr\"oder paths, with horizontal, diagonal or vertical steps, linking
  the grid points of two adjacent sides of an $n\times n$ square grid; these
  families are well known to be in bijection with tilings of the Aztec
  diamond. Our bijection is produced by an invertible ``combing'' algorithm,
  operating on families of paths without non-intersection condition, but
  instead with the requirement that any vertical steps come at the end of a
  path, and which are clearly $2^{n(n+1)/2}$ in number; it transforms them into
  non-intersecting families.

\end{abstract}
\maketitle

\section{Introduction}
The term ``Aztec diamond'', introduced by Elkies, Kuperberg, Larsen and Propp
\cite{EKLP}, refers to a diamond-shaped set of squares in the plane, obtained
by taking a triangular array of squares aligned against two perpendicular
axes, and completing it with its mirror images in those two axes; the
\emph{order} of the diamond is the number of squares along each of the sides
of the triangular array. Their main result concerns counting the number of
domino tilings (i.e., partitions into subsets of two adjacent squares) of the
Aztec diamond.

\begin{thm}[Aztec diamond theorem]
  There are exactly $2^{\binom{n+1}2}$ domino tilings of the Aztec diamond of
  order $n$.
\end{thm}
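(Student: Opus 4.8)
The plan is to prove the theorem in three stages, only the last of which is substantial.

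\emph{Stage 1: reduction to a path count.} First I would recall the classical correspondence between domino tilings of the Aztec diamond of order $n$ and non-intersecting families $(P_0,\dots,P_n)$ of $n+1$ Schr\"oder paths — paths made of horizontal, vertical and diagonal unit steps — in which the $k$-th path joins the $k$-th marked grid point on one side of an $n\times n$ square to the $k$-th marked point on an adjacent side. One direction reads such a family off a tiling by slicing along its ``fault lines'' and recording, in each diagonal corridor, the sequence of dominoes met; conversely a non-intersecting family reassembles uniquely into a tiling, and the non-crossing condition is exactly what makes this well defined. Since this is standard I would present it as a preliminary (or simply quote it), so that the theorem becomes the purely combinatorial assertion that the number of such non-intersecting Schr\"oder families equals $2^{\binom{n+1}2}$.

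\emph{Stage 2: the easy model.} Next I would introduce the family of ``combed'' configurations: tuples $(Q_0,\dots,Q_n)$ of Schr\"oder paths with the \emph{same} prescribed endpoints as above, on which the non-intersection requirement is replaced by the purely local condition that within each path every vertical step occurs after all horizontal and diagonal steps. Here the counting is immediate: for each $k$ the non-vertical part of $Q_k$ is an arbitrary word of a length dictated by the horizontal span (e.g.\ $n-k$ steps, each horizontal or diagonal) and the terminating run of vertical steps is then forced by the endpoint, so $Q_k$ ranges over a set of size $2^{n-k}$; multiplying over $k=0,\dots,n$ gives $2^{\,0+1+\dots+n}=2^{\binom{n+1}2}$ combed configurations, matching the target count.

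\emph{Stage 3: the combing bijection.} The core of the argument is an explicit, invertible ``combing'' algorithm carrying each combed configuration to a non-intersecting family with the same endpoints, and back. I expect it to be assembled from two elementary processes applied to the paths in turn for $k=1,\dots,n$ (or in decreasing order): an \emph{untangling} step that, at each crossing of the $k$-th path with a lower one, exchanges the two tails beyond the crossing — the constructive form of the involution behind the \LGV\ lemma, adapted so as to respect the three step types — and a \emph{cliffifying} step that slides the vertical steps of a path into (or out of) the required terminal position. One must check that these are set up as mutual inverses, that running them in the prescribed order turns an arbitrary combed configuration into a genuinely non-intersecting family while the reverse order does the opposite, and that throughout the procedure the endpoints and the Schr\"oder-path property are preserved and the process terminates. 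The main obstacle is precisely this verification: showing that ``combing'' is one coherent reversible operation — that untangling and cliffifying at the various levels interact correctly, each undoing the other, rather than forming an ad hoc sequence of moves — so that the two directions really are inverse bijections between the two sets, both of which then have cardinality $2^{\binom{n+1}2}$. Composing the three stages yields a bijection between domino tilings of the order-$n$ Aztec diamond and a set of that size, which is the theorem.
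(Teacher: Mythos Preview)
Your Stages~1 and~2 are correct and coincide with the paper: reduction to disjoint Schr\"oder $(n{+}1)$-families, and the trivial count $2^{\binom{n+1}2}$ for the ``cliff-shaped'' families (all vertical steps last).

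Stage~3, however, rests on the wrong mechanism. The \LGV\ tail-swap you invoke is a sign-reversing involution that carries a $\pi$-family to a $\pi'$-family with $\pi'=\pi\circ(i~j)$: it \emph{changes the endpoints} of the two paths involved, and serves only to cancel signed terms in a determinant. It is not, and cannot be turned into, a map from intersecting $n$-families to disjoint ones with the same endpoints. Also, ``untangle'' and ``cliffify'' are not two successive forward stages; in the paper they name the forward basic operation and its inverse.

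The actual untangling of an adjacent pair $(P_i,P_{i+1})$ up to column~$k$ is different in kind. One tracks, column by column, the running maximum $d_j$ of the height excess of $P_i$ over $P_{i+1}$; at each column where $d_j$ strictly increases (necessarily $P_i$ has a horizontal step and $P_{i+1}$ a diagonal one there) the two step \emph{types} are swapped between the paths, and at the end $d_k$ vertical steps are transferred from $P_i$ to $P_{i+1}$ in column~$k$. Those transferred vertical steps are the witness encoding $d_k$, which is precisely what makes the operation invertible. This is a minima-reflection / cycle-lemma style bijection on the difference path, not a tail exchange.

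The genuine technical obstacle is then not the one you name. Having untangled $(P_i,P_{i+1})$, one next untangles $(P_{i+1},P_{i+2})$, which pushes $P_{i+1}$ back toward $P_i$; one must show it never returns far enough to recreate an intersection. The paper proves this via an inequality $e_j\le d_j$ between the two successive running-maximum sequences, and this is what permits a single sweep $i=k,k{+}1,\ldots,n{-}1$ per column (columns taken in decreasing order of~$k$) to produce a disjoint family. Without this monotonicity lemma the process could oscillate and the bijection claim would be empty.
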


This result has been proved in various manners; the original article alone
gives four different proofs, all closely related to a correspondence that it
establishes between the domino tilings and certain pairs of alternating sign
matrices. Domino tilings of an order $n$ Aztec diamond can be brought into a
straightforward bijection with non-intersecting families of $n+1$ lattice
paths between two adjacent sides of an $n\times n$ square grid, using
horizontal, diagonal or vertical steps, as is illustrated in
figure~\ref{path-tiling}.
\begin{figure}
  \includegraphics[width=0.9\textwidth]{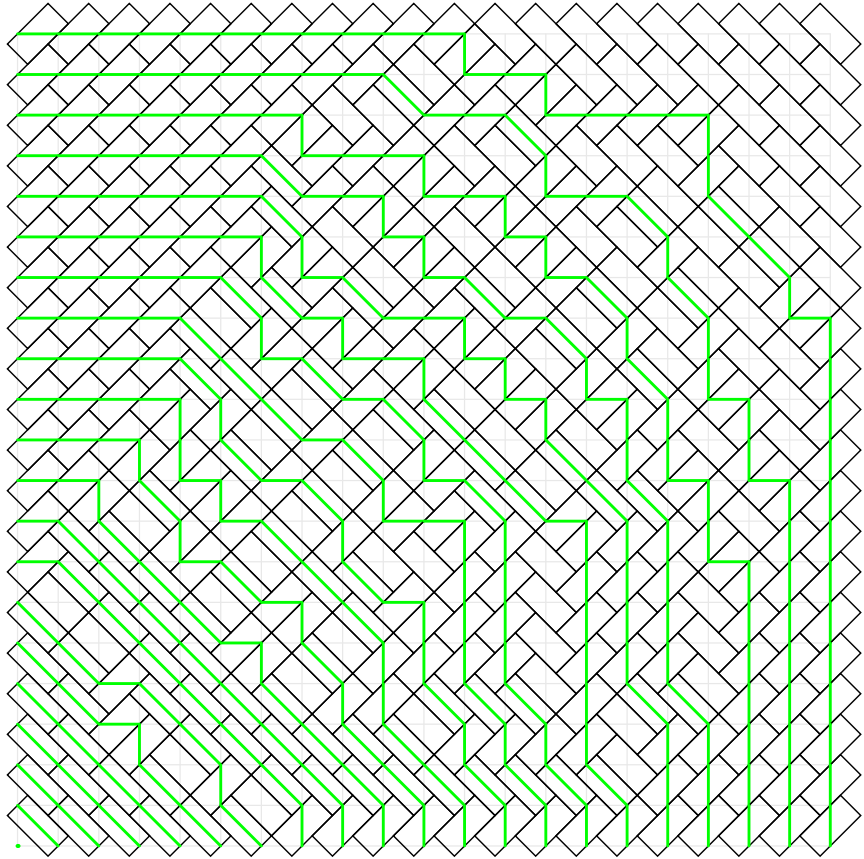}
  \caption{A domino tiling of the Aztec diamond of order~$20$, and (in green)
    the corresponding family of $21$ disjoint paths}
  \label{path-tiling}
\end{figure}
Using this bijection the Aztec diamond theorem was proved by Eu and Fu
\cite{EuFu}, by translating the enumeration of non-intersecting families of
lattice paths into the evaluation of certain Hankel matrices of Schr\"oder
numbers, which can be shown to give the proper power of~$2$ through a clever
interplay between algebraic and combinatorial viewpoints.

In this paper we propose another proof of the Aztec diamond theorem in terms
of non-intersecting families of lattice paths. We start by expressing the
number of such families (using the \LGV\ method) as a determinant (slightly
different from the one of~\cite{EuFu}), which can be evaluated by purely
algebraic manipulations. However we then also give a \emph{bijective} proof of
this enumeration, by giving a reversible procedure that constructs such
non-intersecting families from an array of $n(n+1)/2$ independent values taken
from $\{0,1\}$ (bits). Indeed we use these value to first construct a family
of $n+1$ (possibly intersecting) paths $P_i$ with $0\leq i\leq n$, where there
are $2^i$ possibilities for $P_i$; then we modify the family by a succession
of operations that may interchange steps among its paths, so as to ensure they
all become disjoint. These modifications are invertible step-by-step; to make
this precise we specify at each intermediate point of the transformation
precise conditions on the family that ensure that continuing both in the
forward direction and in the backward direction can be completed successfully.
As a consequence we obtain the descriptions of a number of collections of
intermediate families of paths, all equinumerous.

Ours is not strictly speaking the first bijective proof of the Aztec diamond
theorem. Indeed the fourth proof of the original paper, though not formulated
as a bijective proof, does give a ``domino-shuffling'' procedure (which is
more explicitly described in \cite[section~2]{arctic}), with the aid of which
one can build domino tilings of Aztec diamonds of increasing order, in a
manner that uses a net influx of $n(n+1)/2$ bits of external information (each
passage from a tiling of order~$\imn$ to order~$i$ uses $i$ bits), and such
that all these bits can be recovered from the final tiling produced. However,
in spite of some superficial similarities, the procedure we present is quite
different in nature. The main differences are that our procedure operates not
on tilings but on families of (possibly intersecting) paths, that it proceeds
in a regular forward progression rather than alternating deconstruction,
shuffling, and construction steps, and that this progression involves parts of
the final configuration successively attaining their final state rather than a
passage through complete configurations of increasing order. A more detailed
comparison will be given towards the end of our paper. Like domino shuffling,
our algorithm provides a simple and efficient means to produce large
``random'' examples of disjoint families of lattice paths as in
figure~\ref{circle} (or of domino tilings), which illustrate the ``arctic
circle'' phenomenon of \cite{arctic}.
\begin{figure}
  \includegraphics[width=0.9\textwidth]{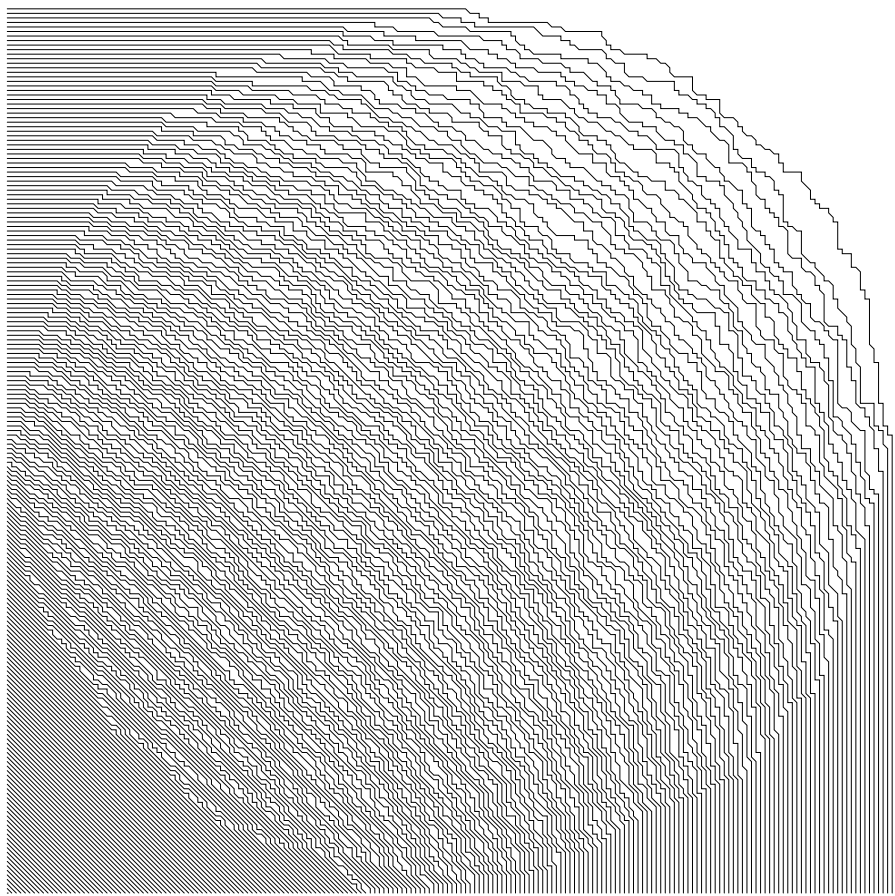}
  \caption{A random disjoint family of 196 paths}
  \label{circle}
\end{figure}

The domino tiling point of view in fact plays no role at all in our
construction; indeed we discovered the known connection with tilings of Aztec
diamonds only after the first author found the bijective proof as one of an
enumeration formula for families of lattice paths. In this paper we shall more
or less follow the route by which we approached the problem, leaving the
connection with Aztec diamonds aside until the final section. Henceforth $n$
will be the number of paths in a family, which is \emph{one more} than the
order of the corresponding Aztec diamond.

We give some definitions in section~2, and in section~3 enumerate disjoint
families of lattice paths using a determinant evaluation. In section~4 we give
some illustrations and considerations leading to an informal approach to our
algorithm, followed by a more formal statement and proof in section~5. Finally
we detail in section~6 the bijection between disjoint families of paths and
tilings of the Aztec diamond (a statement other than by pictorial example does
not seem to appear in the literature), and discuss some complementary matters.

\section{Definitions}

We shall consider paths through points in a square lattice whose basic steps
are either by a unit vector in one of two directions along the axes, or a
diagonal step by the sum of those two vectors. We shall call these
Schr\"oder-type paths. Concretely, since we shall want out paths to connect
points on the two borders of the positive quadrant
$\N\times\N\subset\Z\times\Z$, we take our basic steps to be by one of the
vectors $\pt0{+1}$, $\pt{-1}{+1}$ and $\pt{-1}0$, and the step will then
respectively be called horizontal, diagonal or vertical. This terminology
implies that we think the first index (or coordinate) varying vertically and
the second index varying horizontally, like in matrices. We shall frequently
refer to a set of vertically aligned points as a ``column''; in column~$k$ the
constant second index is equal to~$k$. However for visualisation it will be
slightly more convenient to have the first index increase upwards rather than
(as in matrices) downwards, so this is what we shall do. This amounts to using
the convention of Cartesian coordinates, but with the order of these
coordinates interchanged.

\begin{definition}
  A Schr\"oder-type path from $p$ to $q$, for points $p,q\in\Z\times\Z$, is a
  sequence $P=(p_0,p_1,\ldots,p_k)$ with $k\in\N$, $p_i\in\Z\times\Z$ for
  $0\leq i\leq k$, $p_0=p$, $p_k=q$, and
  $p_{i+1}-p_i\in\{\pt0{+1},\pt{-1}{+1},\pt{-1}0\}$ for $0\leq i<k$. The
  \emph{support} of $P$ is $\supp(P)=\{p_0,p_1,\ldots,p_k\}$.
\end{definition}

We denote by $a_{i,j}$ be the number of  Schr\"oder-type paths from
$\pt{i}0$ to $\pt0j$ (a number also known as the Delannoy number $D(i,j)$).
Then
\begin{equation}
  \label{aijdef}
  a_{i,0}=1=a_{0,j}\quad\text{and}\quad
  a_{i+1,j+1}=a_{i,j+1}+a_{i+1,j}+a_{i,j}\quad\text{for all $i,j\in\N$.}
\end{equation}

\begin{definition}
  The infinite matrix of these numbers is $A=(a_{i,j})_{i,j\in\N}$; its
  upper-left $n\times n$ sub-matrix is $A_{[n]}=(a_{i,j})_{0\leq i,j<n}$, for
  any $n\in\N$.
\end{definition}

Applying the \LGV\ method to the determinant of $A_{[n]}$ leads to the
following kind of families of $n$ Schr\"oder-type paths.

\begin{definition}
  If $\pi\in\Sym_n$ is a permutation of $[n]=\{0,1,\ldots,n-1\}$, then we
  shall call ``$\pi$-family'' any $n$-tuple $(P_0,P_1,\ldots,P_{n-1})$ where
  $P_i$ is a Schr\"oder-type path form $\pt{i}0$ to $\pt0{\pi_i}$ for
  $i\in[n]$. If $\pi$ is the identity permutation of~$[n]$ we shall call a
  $\pi$-family simply an ``$n$-family''. A $\pi$-family is called
  \emph{disjoint} if $\supp(P_0)$, $\supp(P_1)$, \dots\ and $\supp(P_{n-1})$
  are all disjoint.
\end{definition}

A $\pi$-family cannot be disjoint unless $\pi$ is the identity permutation. We
shall use general $\pi$-families only in the initial interpretation of
$\det(A_{[n]})$: after reducing its evaluation to counting disjoint families,
we shall only deal with $n$-families.

\begin{definition}
  A Schr\"oder $n$-family is an $n$-family $(P_0,\ldots,P_{n-1})$ with the
  property that for each $i$ the path $P_i$ does not pass to the side of the
  origin of the anti-diagonal line joining its initial and final points: in
  formula, for each point $(k,l)\in\supp(P_i)$ one has $k+l\geq i$.
\end{definition}

Paths in a Schr\"oder $n$-family are (similar to) actual Schr\"oder paths. A
simple induction argument shows that any disjoint $n$-family is a Schr\"oder
$n$-family. In formulating our bijective proof for the enumeration of disjoint
$n$-families, we shall employ only Schr\"oder $n$-families, but which are not
necessarily disjoint. One particular kind of Schr\"oder paths of interest is
the following.

\begin{definition}
  A Schr\"oder-type path $(p_0,p_1,\ldots,p_k)$ from $(i,0)$ to $(0,i)$ is
  called cliff-shaped if $p_i=(k-i,i)$, in other words if its first $i$ steps
  are either horizontal or diagonal, and any remaining steps are vertical. A
  cliff-shaped Schr\"oder $n$-family is an $n$-family whose paths are
  cliff-shaped Schr\"oder paths.
\end{definition}

Clearly any cliff-shaped Schr\"oder-type path is a Schr\"oder path; therefore
the qualification ``Schr\"oder'' in the final clause is automatic. For a
cliff-shaped Schr\"oder path from $(i,0)$ to $(0,i)$ the first $i$ steps can
be chosen independently to be horizontal or diagonal, after which the
remainder of the path is determined; therefore there are $2^i$ such paths, and
$2^{\binom{n}2}$ cliff-shaped Schr\"oder $n$-families.

\section{Enumeration of disjoint Schr\"oder $n$-families}

If we denote the set of $\pi$-families by $F(\pi)$ then we have
$\#F(\pi)=\prod_{i\in[n]}a_{i,\pi_i}$by definition of the numbers $a_{i,j}$,
and we can therefore evaluate
\begin{equation}
  \det(A_{[n]})
  =\sum_{\pi\in\Sym_n}\sg(\pi)\prod_{i\in[n]}a_{i,\pi_i}
  =\sum_{\pi\in\Sym_n}\sg(\pi)\#F(\pi).
\end{equation}
Now the \LGV\ method says we can replace the latter summation by its
contribution from disjoint families only, since all other contributions cancel
out. Indeed if a $\pi$-family $(P_0,\ldots,P_{n-1})$ has any pair of distinct
paths $P_i,P_j$ whose supports have non-empty intersection, one can modify
$P_i$ and $P_j$ by interchanging their parts beyond (in the obvious sense)
some point of that intersection to obtain a $\pi'$-family, with
$\pi'=\pi\circ(i~j)$ and hence $\sg(\pi')=-\sg(\pi)$, which therefore gives an
opposite contribution to the summation. It remains to make this cancellation
systematic, which can be done by fixing a rule that chooses for every
non-disjoint family a pair $\{i,j\}$ and a point of intersection of the
supports of $P_i$ and $P_j$, in such a way that the same choices will be
produced for the family obtained after modifying $P_i$ and $P_j$ by the
ensuing interchange; this will ensure one obtains a sign-reversing
\emph{involution} of the set of non-disjoint families. This rule can be chosen
in a multitude of ways (although it is not \emph{entirely} trivial to do so,
since the modification may change the set of candidate pairs~$\{i,j\}$ of
indices), and leave it to the reader to choose one.

Since a $\pi$-family can only be disjoint if $\pi$ is the identity
permutation, we find that $\det(A_{[n]})$ is equal to the number of disjoint
Schr\"oder $n$-families. On the other hand this determinant can be easily
evaluated recursively using algebraic manipulations. If $n>0$ and
$E_{[n]}=(\delta_{i,j}-\delta_{i+1,j})_{i,j\in[n]}$ is the upper unitriangular
$n\times n$ matrix with entries $-1$ directly above the diagonal and zeroes
elsewhere above the diagonal, then the product
$A'_{[n]}=\transp{E_{[n]}}A_{[n]}E_{[n]}=(a'_{i,j})_{i,j\in[n]}$ has entries
$a'_{i,j}$ that are $\delta_{i,j}$ if $i=0$ or $j=0$, and are otherwise given
by
\begin{equation}
  a'_{i,j}=a_{i,j}-a_{i,j-1}-a_{i-1,j}+a_{i-1,j-1}=2a_{i-1,j-1}
  \qquad\text{if $i,j>0$,}
\end{equation}
where the latter equality is a consequence of the recursion
relation~(\ref{aijdef}). This means that $A'_{[n]}$ can be written in
$(1,n-1)\times(1,n-1)$ block matrix form
\begin{equation}
  \transp{E_{[n]}}A_{[n]}E_{[n]}
   = \begin{pmatrix}1&0\\ 0&2A_{[n-1]}\\ \end{pmatrix}
  \qquad\text{if $n>0$,}
\end{equation}
from which, since $\det(E_{[n]})=1$, it follows that
$\det(A_{[n]})=2^{n-1}\det(A_{[n-1]})$ when $n>0$, so
\begin{equation}
  \det(A_{[n]})=2^{\binom{n}2}
  \qquad\text{for all $n\in\N$.}
\end{equation}
This proves

\begin{thm}\label{path thm}
  For $n\in\N$, the number of disjoint Schr\"oder $n$-families is
  $2^{\binom{n}2}$. \qed
\end{thm}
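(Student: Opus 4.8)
The plan is to identify the number of disjoint Schr\"oder $n$-families with the determinant $\det(A_{[n]})$, and then to evaluate that determinant by a short algebraic recursion. For the combinatorial identification I would use the \LGV\ method as already outlined: starting from the expansion $\det(A_{[n]})=\sum_{\pi\in\Sym_n}\sg(\pi)\#F(\pi)$, with $\#F(\pi)=\prod_{i\in[n]}a_{i,\pi_i}$, I would construct a fixed-point-free, sign-reversing involution on the set of all non-disjoint $\pi$-families (ranging over all $\pi\in\Sym_n$). Given such a family, the involution selects a canonical intersecting pair $\{i,j\}$ and a canonical common lattice point, and swaps the tails of $P_i$ and $P_j$ beyond that point; this multiplies $\pi$ by the transposition $(i~j)$, hence negates $\sg(\pi)$, so the non-disjoint contributions cancel in pairs. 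Since a disjoint family forces $\pi$ to be the identity and then contributes $+1$, it follows that $\det(A_{[n]})$ equals the number of disjoint $n$-families. Because every disjoint $n$-family is automatically a Schr\"oder $n$-family (by the short column-by-column induction mentioned earlier), this is the same as the number of disjoint Schr\"oder $n$-families.

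For the determinant itself I would conjugate by the unitriangular matrix $E_{[n]}$ and invoke the Delannoy recurrence~(\ref{aijdef}). The $(i,j)$ entry of $A'_{[n]}=\transp{E_{[n]}}A_{[n]}E_{[n]}$ equals $\delta_{i,j}$ when $i=0$ or $j=0$, and for $i,j>0$ equals the alternating sum $a_{i,j}-a_{i,j-1}-a_{i-1,j}+a_{i-1,j-1}$, which the recurrence collapses to $2a_{i-1,j-1}$. Thus $A'_{[n]}$ is block-diagonal with diagonal blocks $1$ and $2A_{[n-1]}$, and since $\det(E_{[n]})=1$ we get $\det(A_{[n]})=\det(A'_{[n]})=2^{n-1}\det(A_{[n-1]})$ for $n>0$. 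A trivial induction from $\det(A_{[0]})=1$ then gives $\det(A_{[n]})=2^{0+1+\dots+(n-1)}=2^{\binom n2}$, completing the proof.

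The entry-wise computation of the congruence transform is just the recurrence rewritten, and the telescoping of exponents is immediate, so I would not dwell on either. The one genuinely delicate point --- and the one I expect to be the main obstacle --- is making the \LGV\ involution truly well defined: as the excerpt itself warns, swapping tails can change which pairs of paths meet and where, so the ``canonical pair, canonical point'' selection rule must be designed so that the swapped family returns exactly the same pair and point. One workable choice is to take the intersecting pair $\{i,j\}$ with lexicographically least $(i,j)$ and, on it, the common lattice point nearest the start of $P_i$; but checking that this (or any chosen rule) is invariant under the switch is the real content of the \LGV\ step, and is the part I would carry out most carefully --- or, as the authors do, explicitly leave to the reader.
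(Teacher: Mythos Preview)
Your proposal is correct and follows essentially the same approach as the paper's Section~3 proof: the \LGV\ identification of $\det(A_{[n]})$ with the number of disjoint $n$-families, followed by the conjugation $\transp{E_{[n]}}A_{[n]}E_{[n]}$ and the Delannoy recurrence to obtain the block form and the recursion $\det(A_{[n]})=2^{n-1}\det(A_{[n-1]})$. You also flag the same delicate point the authors flag (the involution rule must survive the tail swap) and, like them, leave the specific choice to the reader.
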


\section{Some illustrations, and informal approach to a bijection}

In this section we give some illustrations of the problem at hand, and some
considerations and examples that might help appreciate the bijective proof of
theorem~\ref{path thm} that we shall give. Impatient readers may skip to
the next section where this proof is given, and which is independent
of the current one. There the bijection will be formalised in the form of
pseudo-code; a computer program that implements this algorithm, and which was
used to prepare the illustrations in this paper, is available from the
website~\cite{site-Marc} of the second author.

We shall start by listing all $2^{\binom42}=64$ disjoint Schr\"oder
$4$-families, to give an impression of the variety these present. They are
displayed in figure~\ref{paths4}, ordered by increasing number of non-diagonal
steps from bottom(-left) to top(-right).

\begin{figure}[ht]
  \includegraphics[width=0.8\textwidth]{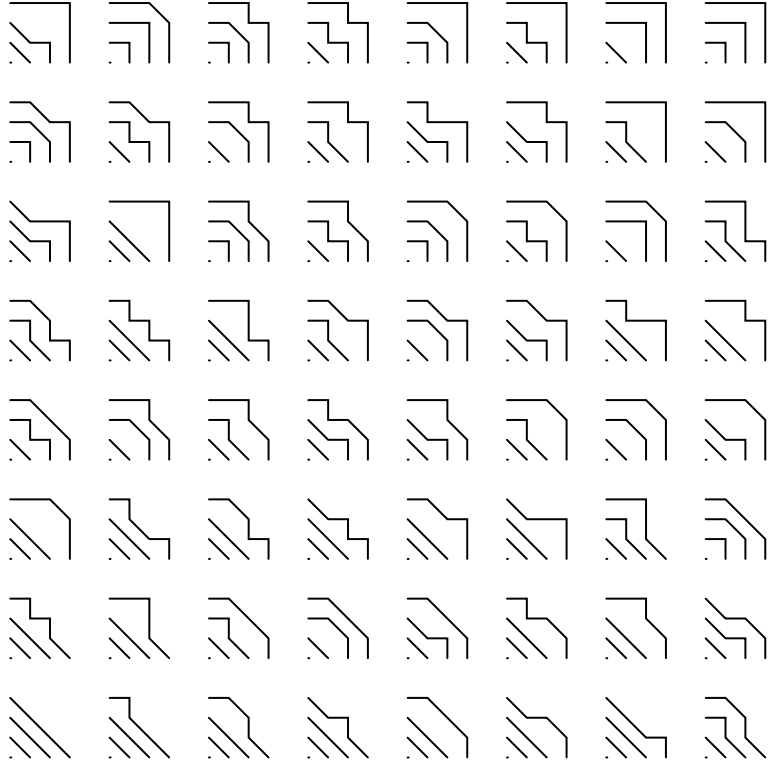}
  \caption{The collection of all disjoint Schr\"oder $4$-families}
  \label{paths4}
\end{figure}

A first fact that is apparent in this figure is that the number of horizontal
steps (which always equals the number of vertical steps), or by
complementation the number of diagonal steps, follows a (symmetric) binomial
distribution for $m=6=\binom42$ independent trials, as the frequencies are
$1,6,15,20,15,6,1$ respectively for $0,1,\ldots,6$ such steps. Even more
remarkably (if less obviously), the joint distribution of the number of
vertical steps in each of the four columns (vertical lines of the grid), which
we shall call the column counts, is the product of \emph{independent} binomial
distributions for $m=0,1,2,3$ respectively. The corresponding statements
remain true for the collection of all disjoint Schr\"oder $n$-families for any
$n\in\N$ (this will be obtained as a corollary of our bijective proof). By an
obvious symmetry one also has the corresponding statement for the joint
distribution of the number of horizontal steps on each of the four horizontal
lines of the grid (row counts), with $m$ increasing from bottom to top.

One also has a similar statement for joint distribution of what we shall call
inter-column counts, the number of horizontal steps connecting each pair of
successive columns; now $m$ decreases, from $n-1$ between the leftmost pair of
columns to $1$ between the rightmost pair. This statement can be seen to be
equivalent to the one about column counts, if one uses the duality illustrated
in figure~\ref{duality}; this is a bijection between the set of disjoint
Schr\"oder $n$-families and the set of such families transformed by a central
reflection sending the origin to $(n-\frac12,n-\frac12)$ (grid points are
mapped to centres of squares of the original grid).
\begin{figure}[hbt]
  \includegraphics[width=.5\textwidth]{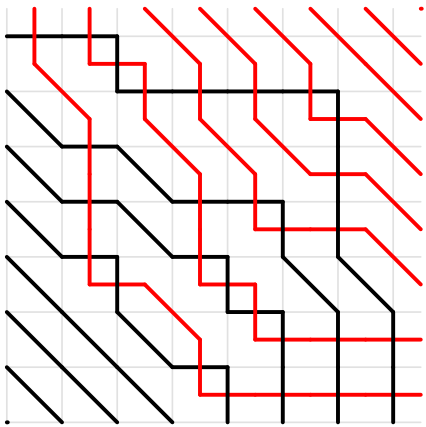}
  \caption{A disjoint $8$-family and (in red) its dual family}
  \label{duality}
\end{figure}
This correspondence is such that halfway on each horizontal or vertical step
of a disjoint $n$ family, the step crosses a vertical respectively horizontal
step of the dual family. By contrast to these facts, the joint distribution of
the number of horizontal (or equivalently vertical) steps in each of the
individual paths that make up a disjoint family does not satisfy any such
independence.

Given these observations, one may hope to find a bijection between disjoint
Schr\"oder $n$-families and triangular arrays of $\binom n2$ ``bits'' (values
in $\{0,1\}$) in such a way that, for a certain arrangement of the triangle
into columns of length~$i$ for $i=0,1,\ldots,n-1$, the sum of the bits in
column~$j$ will give the column count for column~$j$ of the corresponding
disjoint $n$-family.

Looking at just the $6$ paths with a single horizontal and vertical step, in
the bottom line of figure~\ref{paths4}, one sees that the point of
intersection of the lines contining these steps are all different, and form
the triangle of all grid points that are not visited by the paths of the
unique $4$-family with diagonal steps only (at the bottom left). This might
suggest placing the triangular array of bits on those grid points, in the hope
to find a bijection with disjoint $n$-families such that \emph{in addition}
the row sums of these bits give the row counts of the corresponding
$n$-family. This is easily seen to be impossible though, since the joint
distribution of the column counts and row counts of disjoint $n$-families is
different from the joint distribution of column sums and row sums in such
triangular array of bits. For instance for any $c\leq n$ there exist disjoint
$n$-families with $c$ horizontal and $c$ vertical steps, all of them
contributing to the \emph{same} column count respectively row count; when
$c\geq2$ the corresponding situation cannot occur for the column and row sums
of a triangular array of bits. On the other hand it may be checked in the
example that the joint distribution of column counts and inter-column counts
over all disjoint $4$-families is precisely that of column sums and row sums
in such triangular array of $\binom42=6$ bits. This suggests that in
formulating a bijection one should prefer to abandon the transposition
symmetry, and instead focus on (say) vertical alignment only. Indeed our
bijection will be such that column counts (of vertical steps) and inter-column
counts (of horizontal steps) can be immediately read off from the triangular
bit-array. However, the way these steps are distributed within their column
respectively inter-column space will not be so easy to read off.

The starting point of our bijection will then be to translate a triangular
array of $\binom n2$ bits into a cliff-shaped $n$-family, where line~$i$ of
the triangle (viewed in some appropriate direction) determines the
cliff-shaped path $P_i$. For such families of paths column counts and
inter-column counts are defined, just like for disjoint families; this time
one has the particular circumstance that only path $P_i$ contributes to the
column count for column~$i$. So each bit has two associated indices: that of a
path $P_i$ (which also gives the column to which it may contribute a vertical
step), and that of a an inter-column space, from column~$j$ to $j+1$, to which
it may contribute a horizontal step; the triangle runs through values $0\leq
j<i<n$. There does not seem to be a particularly suggestive way to view our
triangle as positioned in some specific way relative to the $n$-family; a
somewhat suggestive choice would be to take the set of (midpoints of)
horizontal steps in the ``no diagonal steps'' $n$-family (at the top right of
figure~\ref{paths4}).

Our main task will then be to find a systematic and reversible way to take any
cliff-shaped $n$-family and redistribute its horizontal and vertical steps
among the different paths, keeping each of these steps within its inter-column
space respectively within its column, so as to obtain a disjoint family. We
can give some heuristic arguments to explain the form that our algorithm will
take. For the redistribution of steps, the vertical steps will play a passive
role, since the fact that within column~$k$ they are originally all
concentrated in the path~$P_k$ makes that they initially carry very little
information. So we shall operate primarily on the initial parts of
cliff-shaped paths, which contain a mix of horizontal and diagonal steps;
whenever we move a horizontal step from one path to another (exchanging it
with a diagonal step), a corresponding vertical step will also be moved
between the paths so as to keep the ending point of that path unchanged.

An important aspect of our ``untangling'' procedure will be that it operates
essentially on parts of the paths that contain only horizontal and diagonal
steps. Since redistributing vertical steps in column~$k$ may move them from
path~$P_k$ into paths $P_i$ with $i>k$, it is practical to so treat columns
sequentially by \emph{decreasing} value of~$k$, and to leave column~$k$ as it
is once the vertical steps it contains are redistributed. In this way we avoid
having ``polluted'' paths with vertical steps in the columns under
consideration, and their parts beyond the column~$k$ where redistribution
currently takes place can be ignored by the procedure.

One more property of our procedure may be mentioned here, namely that if the
initial cliff-shaped $n$-family happens to be disjoint as well, we just leave
it as it is. Although this will involve a vanishingly small fraction of the
families as $n$ increases (notwithstanding the $26$ such cases out of $64$ for
$n=4$), the principle of acting only when clearly needed is an important guide
to understanding the procedure. This brings us to the following setting where
action may be required: we have two successive paths $P_i,P_{i+1}$ with $i\geq
k$, whose parts up to the point where they enter column~$k$ do not contain any
vertical steps, but which parts may intersect. At the point in time where we
start considering column~$k$ (vertical steps having been redistributed in all
columns beyond it), this situation occurs for $i=k$: since $P_k$ cannot have
been involved in any of the previous operations, it is in its initial state,
and could be any cliff-shaped path. In particular there no reason to suppose
anything about its position relative to $P_{k+1}$. And even though $P_{k+1}$
can have been operated upon, and therefore may be more likely to have certain
forms than others, it certainly \emph{can} also involve any sequence of
horizontal and diagonal steps before entering column~$k$. Indeed $P_{k+1}$
could also be in its initial state, as would happen if no action at all was
required before considering column~$k$, and as is certainly the case at the
very beginning, when for $k=n-2$ we consider the paths $P_{n-2},P_{n-1}$. So
apart from the absence of vertical steps we cannot assume anything about the
first $k$ steps of $P_i$ and~$P_{i+1}$. On the other hand we shall assume that
in column~$k$ only $P_i$ may have vertical steps initially, and also that
beyond column~$k$ the paths are already disjoint.

\begin{figure}[hbt]
  \includegraphics[width=.8\textwidth]{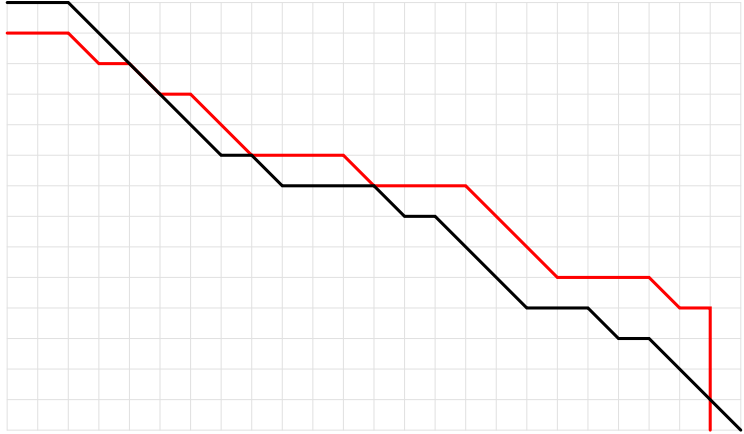}
  \caption{Initial parts of a pair of paths in need of untangling}
  \label{untangle-before}
\end{figure}
A typical situation is depicted in figure~\ref{untangle-before}; the red path
is~$P_i$ and the black one~$P_{i+1}$. The paths have been truncated to their
initial parts relevant to the task of untangling: path $P_{i+1}$ has no
vertical steps in column $k=23$ and passes to column~$k+1$, while path $P_i$
does have at least the vertical steps in column~$k$ shown. It may be that
$P_i$ continues further downwards (as it will when $i=k$, since then $P_i$ is
cliff-shaped), or it may pass to column~$k+1$ as well; but if it does, it must
do while staying below~$P_{i+1}$.

Since the paths depicted first meet in column~$4$, the principle to act only
when needed suggests leaving everything up to column~$3$ intact. We might then
avoid the collision in column~$4$ either by taking a diagonal step in~$P_i$ or
by taking a horizontal step in~$P_{i+1}$, but if we want to keep the number of
horizontal steps unchanged, and more precisely the number of horizontal steps
from column~$3$ to column~$4$, the only (easy) way to achieve this is by
making \emph{both} these changes. As this transfers a horizontal step from
$P_i$ to $P_{i+1}$, we shall also need to transfer a vertical step, in
column~$k$. As we shall see below, the latter transfer combined with the
initial absence of vertical steps in~$P_{i+1}$ is a key point in being able to
reverse the modification(s) made, as it serves as witness for the effort that
was required to make the pair of paths disjoint.

Having ``switched step directions'' between columns $3$~and~$4$, the
remainders of $P_i$ and~$P_{i+1}$ are shifted down respectively up by one
unit. It might seem that the next (and only) remaining problem that needs
resolving occurs in the passage to column~$15$, where the original path $P_i$
rises \emph{two} units above~$P_{i+1}$ for the first time, so that the
mentioned remainders meet in spite of the shifts. However, while switching
step directions in the passages to columns $4$~and~$15$ only (and moving two
vertical steps to~$P_{i+1}$) would succeed in making the paths disjoint, the
result leaves insufficient information to reconstruct the set of steps that
were adjusted, and hence the initial paths. The modified steps cause the new
paths to move apart at a point where they are as close together as they may,
but so do the passages to columns $8$ and $12$ (in the modified paths), with
nothing to distinguish these cases. Therefore, we shall instead switch
directions \emph{every} time that the height of $P_i$ above $P_{i+1}$ first
reaches a new nonnegative value, which in the example happens for the values
$0,1,2,3$ when passing respectively to columns $4$, $6$, $15$ and~$23$. The
result of those four interchanges, and of moving $4$ vertical steps from $P_i$
to $P_{i+1}$ in column~$23$, is shown in figure~\ref{untangle-after}.
\begin{figure}[hbt]
  \includegraphics[width=.8\textwidth]{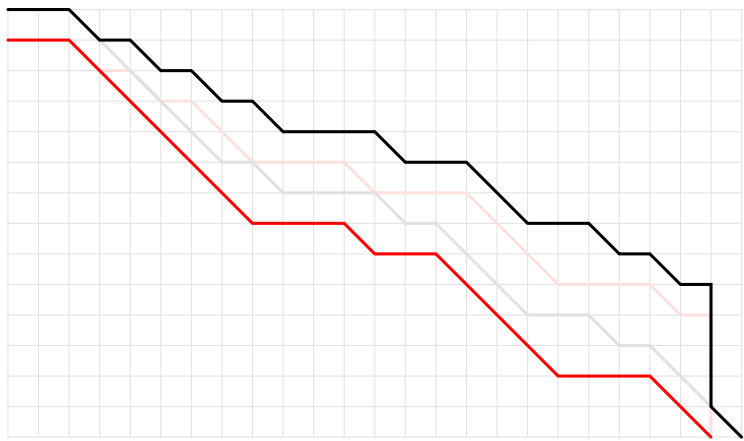}
  \caption{Initial parts of the pair of paths untangled}
  \label{untangle-after}
\end{figure}

One can view this transformation in terms of a single path $\Delta$ of a new
kind, defined by the ``difference'' of $P_{i+1}$ and $P_i$: one that makes a
down-step whenever between two columns $P_{i+1}$ has a diagonal step and $P_i$
a horizontal one, an up-step when $P_{i+1}$ has a horizontal step and $P_i$ a
diagonal one, and a neutral step when $P_{i+1}$ and $P_i$ have the same type
of step (in $\Delta$ the two kinds of neutral steps are distinguished, so that
no information is lost). Then $P_i$ and~$P_{i+1}$ are disjoint if and only if
the maximal depth~$d$ beneath its starting level to which $\Delta$ descends
is~$0$, and we have described a procedure to transform any $\Delta$ into a
path with $d=0$, by reversing all down-steps that lead to a new left-to-right
minimum. The procedure is well known in this setting, and in various
equivalent guises; see for instance \cite{lattice walks} and references
therein. The mapping it defines has the important property of becoming
injective when restricted to paths with a given initial value of~$d$. This can
be seen by viewing the transformation as obtained by iterating as long as
possible the operation of reversing the first down-step that leads to the
\emph{globally} minimal level (which is initially~$d$); this iteration
produces the reversals from right to left. Each such operation is invertible
by reversing the last up-step starting at the globally minimal level, so given
$d$ one can undo the entire transformation by repeating this inverse operation
$d$ times.

The procedure described allows making a pair of successive paths $P_i,P_{i+1}$
disjoint up to column~$k$, and is reversible provided that $P_{i+1}$ initially
had no vertical steps in that column. Assuming that the paths
$P_{k+1},\ldots,P_{n-1}$ have previously been made disjoint, we can use this
procedure to make $P_k$ disjoint from~$P_{k+1}$. But since this in general
involves moving parts of both paths away from each other, it may cause
$P_{k+1}$ to intersect $P_{k+2}$ even though they were disjoint before. In
fact one could not expect being able to make $P_k,\ldots,P_{n-1}$ disjoint so
easily: one needs to potentially introduce vertical steps in column~$k$ for
\emph{all} these paths. After all, once this disjointness is obtained, further
transformations will no longer change column $k$, and for each $i\geq k$ there
certainly exist disjoint $n$-families in which $P_i$ has one or more vertical
steps in column~$k$.

An obvious idea is then to continue applying the untangling procedure as long
as there are pairs of adjacent paths that intersect. But unless this process
proceeds in a very orderly fashion, it will be problematic to invert, and
could even fail to terminate. Fortunately it turns out that the process is
indeed very orderly: if after untangling $P_i$ and~$P_{i+1}$ we need to
untangle $P_{i+1}$ and~$P_{i+2}$, then this may cause $P_{i+1}$ to ``bounce
back'' towards $P_i$, but when this happens the extra space that their initial
untangling had produced between $P_i$ and~$P_{i+1}$ is always sufficient to
absorb the displacement of~$P_{i+1}$, thus avoiding any new intersection
between them. Given this state of affairs, which we shall prove in the next
section, a single sweep of untangling of paths by increasing value of~$i$,
starting at $i=k$, will suffice. The sweep will end when no new intersections
are produced, which at the very last is bound to happen after untangling the
final paths $P_{n-2}$ and~$P_{n-1}$, if one ever gets to that point.

The succession of intermediate paths families during such a sweep of
executions of the untangling procedure for increasing values of $i$ is
illustrated in figure~\ref{sweep}, with the path $P_i$ for the next such
execution in red. In the very last such execution, the paths are found to be
disjoint already and nothing is changed.
\begin{figure}[hbt]
  \includegraphics[width=\textwidth]{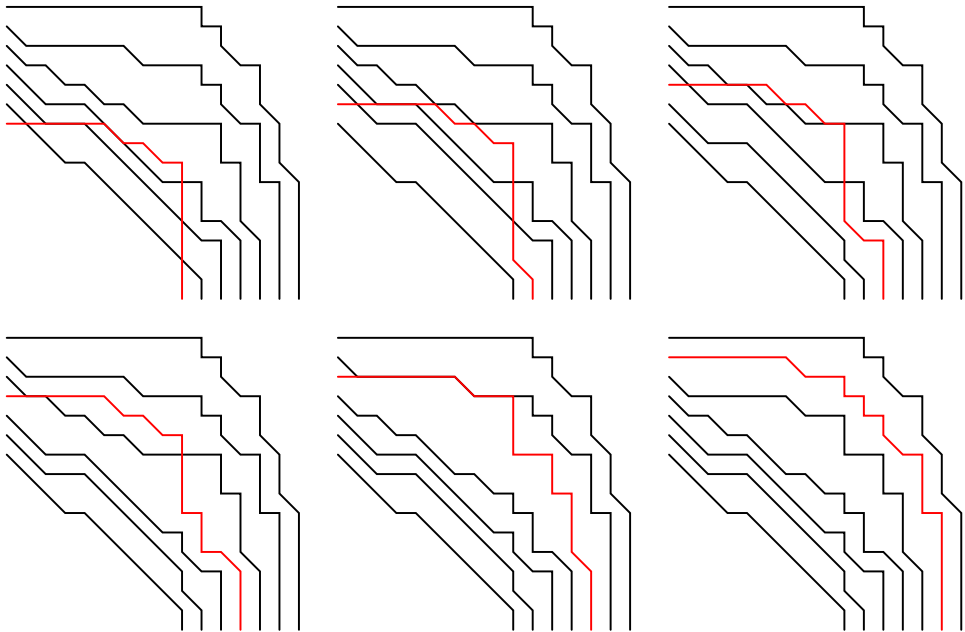}
  \caption{One sweep of untangling until disjointness is obtained}
  \label{sweep}
\end{figure}

To put everything together, it remains to start with a cliff-shaped $n$-family
determined by a triangular array of $\binom n2$ bits, and apply the above
``sweeps'' distributing vertical steps in column~$k$ among the paths, for
$k=n-2,\ldots,2,1$. This process is illustrated in figure~\ref{phases},
showing the transformation of a cliff-shaped $49$-family into a disjoint
$49$-family in several stages, including the initial and final ones. To avoid
distraction, not yet treated cliff-shaped paths, which intersect each other
and the already ``combed'' ones, are in light blue.
\begin{figure}
  \includegraphics[width=\textwidth]{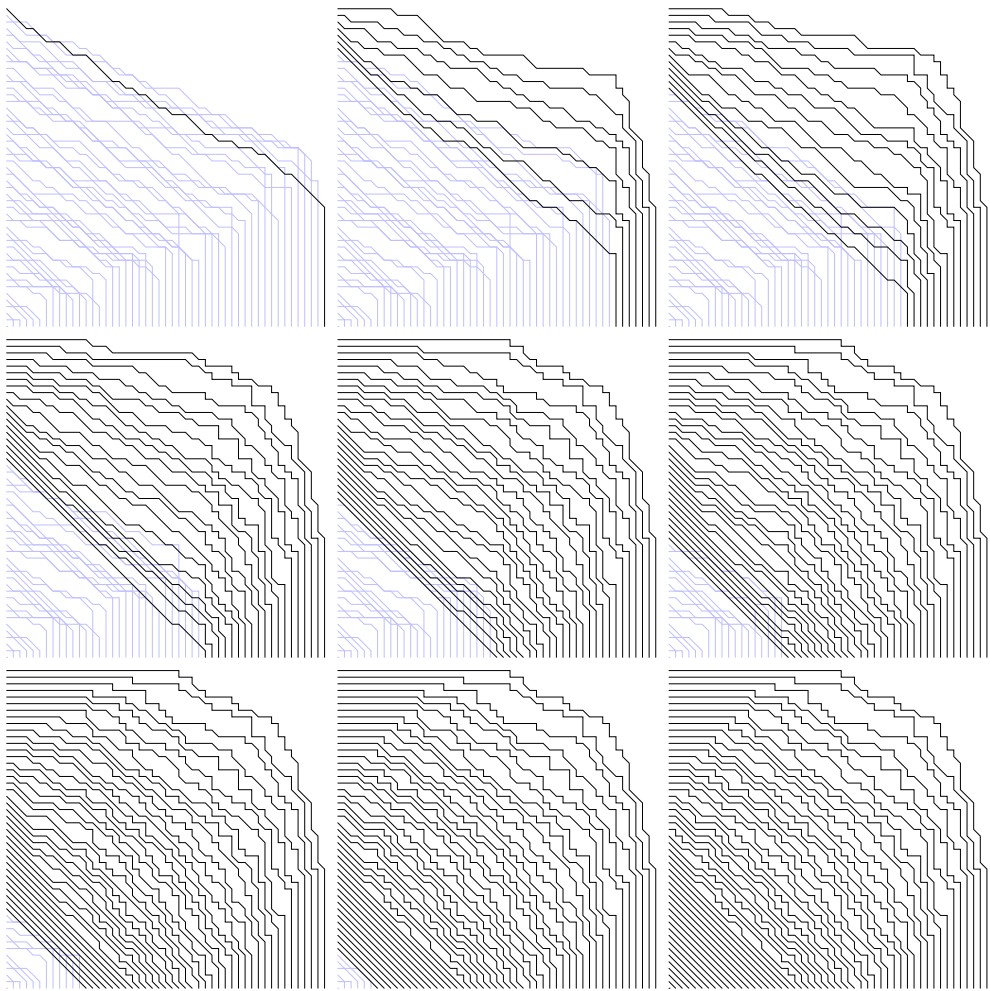}
  \caption{Several intermediate phases of combing $49$ paths}
  \label{phases}
\end{figure}

\section{A bijective proof}

We shall now formulate a bijective proof of theorem~\ref{path thm}, by giving
an algorithmically defined bijection between the set of disjoint Schr\"oder
$n$-families and the set of cliff-shaped Schr\"oder $n$-families, the latter
set having the number of elements mentioned in the theorem. We shall focus
first on the direction from cliff-shaped to disjoint $n$-families, where the
goal is to remove intersections between pairs of paths (this is what
``combing'' in our title refers to). However our claim that the map so defined
is a bijection depends the existence of an inverse transformation defined for
any disjoint $n$-families, and which defines an inverse mapping; in this
direction the ``goal'' is to move, for all~$k$, all vertical steps in
column~$k$ towards path~$P_k$, where they will appear at the end, so that the
paths become cliff-shaped. Whenever one algorithm transforms a family of one
type into another, the other algorithm applied to the family produced will
realise a step-by-step inverse of the initial transformation.

Our basic operations, forward and backward, operate on a pair of successive
paths $(P_i,P_{i+1})$ in a Schr\"oder $n$-family (the others are ignored), and
depend on an additional parameter $k\leq i$. These paths should have no
vertical steps in columns $j<k$; the operations will not introduce such steps
either. Moreover they leave each of the paths unchanged beyond column~$k$, so
the only way vertical steps play a role is by a possible transfer between the
paths of vertical steps in column $k$. The forward operation defines a
bijection from the pairs of such paths for which $P_{i+1}$ does not have any
vertical steps in column~$k$ while $P_i$ has enough of such steps in a sense
to be made precise, to the pairs of such paths with disjoint supports. In the
context where we shall apply the operations, these conditions will be
satisfied, and $P_i,P_{i+1}$ will also be disjoint beyond column~$k$.

In what follows the following assumptions are tacitly made: all paths will be
assumed to without vertical steps in columns $j<k$, the paths $P_i$ and $P'_i$
are Schr\"oder paths from $(i,0)$ to $(0,i)$, and the paths $P_{i+1}$ and
$P'_{i+1}$ are Schr\"oder paths from $(\ip,0)$ to $(0,i+1)$. The absence of
vertical steps allows the parts of such paths up to column~$k$ to be viewed as
graphs of functions: for $\delta\in\{0,1\}$ and $0\leq j\leq k$, let
$h_\delta(j)$ be the greatest (and unique, unless $j=k$) value~$v$ with
$(v,j)\in\supp(P_{i+\delta})$. These functions $h_0$ and $h_1$ are weakly
decreasing, and their value decreases by at most~$1$ at each step.

For defining the forward operation (and so with the mentioned assumptions
on $P_i,P_{i+1}$), put
\begin{equation}
  \label{djdef}
  d_j=\max\{\,h_0(j')+1-h_1(j')\mid 0\leq j'\leq j\,\}
  \qquad\text{for $0\leq j\leq k$}.
\end{equation}
The sequence $(d_0,d_1,\ldots,d_k)$ is weakly increasing, and by at most~$1$
at each step; it starts with $d_0=0$. One will (still) have $d_k=0$ if and
only if the paths $P_i$ and $P_{i+1}$ have disjoint supports up to column~$k$.
Now define $h'_0,h'_1$ by
\begin{equation}
  \label{fprimedef}
  h'_0(j)=h_0(j)-d_j \quad\text{and}\quad h'_1(j)=h_1(j)+d_j
  \qquad\text{for $0\leq j\leq k$.}
\end{equation}
There is at most one pair of paths $(P'_i,P'_{i+1})$, unchanged from their
final points in column~$k$ on with respect to $(P_i,P_{i+1})$, that gives
rise to $(h'_0,h'_1)$ in the same way as $(P_i,P_{i+1})$ gives
rise to $(h_0,h_1)$. Our operation is defined only when such $(P'_i,P'_{i+1})$
exists, and then replaces $P_i$ by $P'_i$ and $P_{i+1}$ by~$P'_{i+1}$.

For any $j<k$, the steps in $P'_i,P'_{i+1}$ from column $j$ to column $j+1$
will be of the same type as the corresponding steps in $P_i,P_{i+1}$
respectively, unless $d_j<d_{j+1}$. By (\ref{djdef}), the latter case occurs
only in situations where $h_0(j)=h_0(j+1)$ and $h_1(j)>h_1(j+1)$, in other
words when the step from column $j$ to column $j+1$ is horizontal in $P_i$ and
diagonal in $P_{i+1}$. When indeed $d_j<d_{j+1}$, these directions are
interchanged in $P'_i,P'_{i+1}$: the step from column $j$ to column $j+1$ is
diagonal in $P'_i$ and horizontal in~$P'_{i+1}$. This situation arises $d_k$
times in all. As a result, $P'_{i+1}$ has $(h_1(k)+d_k,k)$ as first point
in column~$k$, after which it has $d_k$ vertical steps to reach the point
$(h_1(k),k)$ where the original path $P_{i+1}$ enters column~$k$.

The path $P'_i$ on the other hand will have $d_k$ vertical steps \emph{less}
in column~$k$ than~$P_i$ has. The (unique) condition for the existence of
$(P'_i,P'_{i+1})$ then is that $P_i$ has at least that many such steps to
begin with. So we can detail the requirement alluded to above that $P_i$ have
enough vertical steps in column~$k$: we must assume that it has at least~$d_k$
such steps, as defined in~(\ref{djdef}). An equivalent, maybe more natural,
way of stating this requirement is that if we would modify~$P_i$ by removing
all its vertical steps from column~$k$ and insert them into column~$0$ instead
(shifting all intermediate steps), then the resulting (Schr\"oder-type but maybe not Schr\"oder) path would have its support disjoint
from that of $P_{i+1}$.

It is clear that $h'_0(j)<h'_1(j)$ for all $0\leq j\leq k$, since
\begin{equation}
  \label{fprimeineq}
  h'_1(j)-h'_0(j)-1=h_1(j)-h_0(j)-1+2d_j  
  \geq d_j,
\end{equation}
and $d_j\geq0$; moreover for $j=k$ one gets that $h'_0(k)<h'_1(k)-d_k=h_1(k)$,
which is the first coordinate of the point where $P_{i+1}$ enters into
column~$k$, and by the assumption that $P_{i+1}$ has no vertical steps in
column~$k$, this point is also the last one of~$P'_{i+1}$ in that column. This
shows that the supports of $P'_i$~and~$P'_{i+1}$ are disjoint up to column~$k$
inclusive. In fact this inequality shows that these paths leave at least $d_j$
empty places between them in any column~$j<k$, so whenever $d_j$ increases
with~$j$, the modified paths are forced to remain further and further apart.
Thus an increase $d_j<d_{j+1}$ not only implies one has a diagonal step
in~$P'_i$ and a horizontal step in~$P'_{i+1}$ between columns $j$~and~$j+1$,
but also that the paths then continue to leave this increased number~$d_{j+1}$
of spaces (or more) between them, until they enter column~$k$.

The backward operation uses this property to detect the points of increase
of~$d_j$ from the shape of the paths $P'_i,P'_{i+1}$ alone (so that it can
then reconstruct $(d_0,\ldots,d_k)$), but needs to distinguish this situation
from one where the \emph{original} difference $h_1(j)-h_0(j)$ increases
at~$j+1$ without ever falling back subsequently. But in the latter case one
has $d_k=d_j<h'_1(j)-h'_0(j)$ (the equality follows from ``not falling back'',
and the inequality from (\ref{fprimeineq})), whereas in the case $d_j<d_{j+1}$
one has instead $d_k\geq d_{j+1}=d_j+1=h'_1(j)-h'_0(j)$ (the final equality
holds because the maximum in (\ref{djdef}) must be attained for $j'=j$).
Therefore one can tell the two cases apart provided that $d_k$ is known. But
that is the case: $P_{i+1}$ has no vertical steps in column~$k$, so one can
read off $d_k$ as the number of vertical steps of $P'_{i+1}$ in column~$k$.

So we can now formulate the backward operation, which can be applied to a pair
of paths $(P'_i,P'_{i+1})$ with supports disjoint up to column~$k$ inclusive.
We start by defining functions $h'_0,h'_1$ in terms of respectively
$P'_i,P'_{i+1}$, as before, and in addition let $d$ be the number of vertical
steps of $P'_{i+1}$ in column~$k$; then define the sequence
$(d_0,d_1,\ldots,d_k)$ by
\begin{equation}
  \label{djdefx}
  d_j=\min(\{d\}\cup\{\,h'_1(j')-h'_0(j')-1\mid j\leq j'\leq k\,\})
  \qquad\text{for $0\leq j\leq k$}.
\end{equation}
We then find $h_0,h_1$ by using equation (\ref{fprimedef})
in the opposite direction:
\begin{equation}
  \label{fdef}
  h_0(j)=h'_0(j)+d_j \quad\text{and}\quad h_1(j)=h'_1(j)-d_j
  \qquad\text{for $0\leq j\leq k$,}
\end{equation}
and finally take $(P_i,P_{i+1})$ to be the unique pair of paths, unchanged
with respect to $(P'_i,P'_{i+1})$ from their final points in column~$k$
onwards, giving rise to $(h_0,h_1)$.

Several easy verifications suffice to see that this backward operation is well
defined. The sequence $(d_0,d_1,\ldots,d_k)$ is weakly increasing by at most
one at each step, and satisfies $d_k=d$ (since the supports of $P'_i$ and
$P'_{i+1}$ are disjoint in column~$k$) and $d_0=0$ (since the disjointness of
the supports of $P'_i$ and $P'_{i+1}$ in column~$j$ gives
$h'_1(j)-h'_0(j)-1\geq0$, while $h'_1(0)-h'_0(0)-1=0$). All $d$ vertical steps
in column~$k$ of $P'_{i+1}$ are absent from $P_{i+1}$ but transferred
to~$P_i$, and the steps in $P_i$ and~$P_{i+1}$ from column~$j$ to $j+1$ stay
of the same kind as respectively in $P_i$ and~$P_{i+1}$ when $d_j=d_{j+1}$,
while the steps interchange directions when $d_j<d_{j+1}$;
this establishes the existence of $(P_i,P_{i+1})$.

When the pair $(P'_i,P'_{i+1})$ to which the backward operation is applied was
itself obtained by the forward operation from $(P_i,P_{i+1})$, it can be
checked that in the backward operation $d=d_k$, and that the sequence
$(d_0,\ldots,d_k)$ is the same as it was in the forward operation (the
condition causing $d_j<d_{j+1}$ in the backward operation is equivalent to the
one for which we argued that it it characterises $d_j<d_{j+1}$ in the forward
operation); in this case the pair obtained in the backward operation is
therefore the original pair~$(P_i,P_{i+1})$. Conversely, if the backward
operation is applied to any applicable pair $(P'_i,P'_{i+1})$, then the
forward operation can be applied to the resulting pair~$(P_i,P_{i+1})$, and it
will reconstruct $(P'_i,P'_{i+1})$. Again this follows by showing that the
forward operation reproduces the same sequence $(d_0,\ldots,d_k)$ as the
backward operation, as follows. For a maximal interval of consecutive
indices~$j$ for which during the backward operation $d_j$ has a constant
value, say~$c$, one has the relation $h_0(j)+1-h_1(j)=2c-(h'_1(j)-h'_0(j)-1)$
throughout. Also the maximal value of this expression is attained for the
minimum such~$j$ (as well as for the maximum such~$j$, provided it is less
than~$k$). Therefore during the forward operation, the value of $d_j$ from
(\ref{djdef}) will be constant on such intervals as well. On the other hand,
when $d_j<d_{j+1}$ during the backward operation, one has
$h_0(j)+1-h_1(j)=h_0(j+1)-h_1(j+1)$, and together with the constancy result we
just gave this shows that $d_j<d_{j+1}$ during the forward operation as well,
and therefore that $(d_0,\ldots,d_k)$ is reconstructed identically.

Let us resume the description of these basic operations as somewhat more
formalised computational procedures. To that end we need a concrete
representation of the $n$-families of paths operated upon. We choose a
representation that facilitates handling paths with a varying number of steps,
and allows making evident the simple structure of our operations. An
$n$-family of paths is encoded by a pair of lower triangular matrices $(B,D)$
indexed by $[n]\times[n]$ (recall that $[n]=\{0,1,\ldots,n-1\}$). The matrix
$B$ is strictly lower triangular with entries in~$\{0,1\}$, while $D$ is
weakly lower triangular with entries in~$\N$. the entry $B_{i,j}$ indicates
the direction of the step in $P_i$ between column $j$~and~$j+1$ (a value~$0$
for horizontal, or $1$ for diagonal), and the entry $D_{i,j}$ counts the
number of vertical steps of $P_i$ in column~$j$. A cliff-shaped $n$-family is
determined by $B$ alone, and the forward ``combing'' algorithm will gradually
compute $D$ for the corresponding disjoint $n$-family from it while updating
$B$ to match it. The reverse ``uncombing'' algorithm takes a disjoint
$n$-family encoded by $B,D$ and computes $B$ for the corresponding
cliff-shaped $n$-family from it.

The forward basic operation, which will make paths $P_i,P_{i+1}$ disjoint up
to column~$k\leq i$ inclusive, assumes $D_{i,k}$ is already determined, and at
the end of its execution transfers part of its value to $D_{i+1,k}$ (taken to
be $0$ initially). Its description in procedure~\ref{op-forward} uses local
variables $\textit{cur}\in\Z$ recording the current value of
$h_0(j)+1-h_1(j)$, and $d\in\N$ recording the maximum of \textit{cur} so far.
In this pseudo-code `$\gets$' denotes assignment of a new value, and we write
indices in square brackets to remind that this describes individually
assignable entries.

\begin{algorithm}
  \caption{Forward operation on paths $i,i+1$ up to column $k$ inclusive}
  \label{op-forward}
  \begin{algorithmic}
  \newcommand\cur{\textit{cur}}
    \STATE $\disj{i}:$
    \STATE $\cur\gets0$,\quad $d\gets0$
    \FOR {$j \FROM 0 \UPTO k-1$}
    \STATE $\cur \gets \cur + B[i+1,j] - B[i,j]$
    \IF {$cur>d$}
    \STATE $d\gets \cur$
    \STATE $B[i,j]\gets1$,\quad $B[i+1,j]\gets0$
    \ \COMMENT {interchange directions of steps}
    \ENDIF \ENDFOR
    \STATE $D[i,k] \gets D[i,k] - d$,\quad  $D[i+1,k] \gets d$
    \ \COMMENT{transfer $d$ vertical steps to $P_{i+1}$}
  \end{algorithmic}
\end{algorithm}

The backward operation in procedure~\ref{op-backward} retraces the steps of
procedure~\ref{op-forward} using the same local variables \textit{cur} and
$d$. While the sequence of values of $d$ retraces those in
procedure~\ref{op-forward} in reverse order, the values of \textit{cur} are
different: they record the current value of $h'_1(j)-h'_0(j)-1$ for the
functions $h'_0,h'_1$ corresponding to the disjoint paths described by the
initial values for procedure~\ref{op-backward}; in particular
$\textit{cur}\geq0$ throughout the execution. In order to set \textit{cur}
correctly, it assumes that the values $h'_0(k)$ and $h'_1(k)$, where paths
$P_i$ and $P_{i+1}$ respectively enter column~$k$ (which values are not
available directly in our encoding), have been stored beforehand as elements
$h_i$, $h_{i+1}$ of an auxiliary array; these values are updated to reflect
the effect of the operation.

\begin{algorithm}
  \caption{Backward  operation on paths $i,i+1$ up to column $k$ inclusive}
  \label{op-backward}
  \begin{algorithmic}
  \newcommand\cur{\textit{cur}}
    \STATE $\clify{i}:$
    \STATE $d \gets D[i+1,k]$,\quad $\cur \gets h[i+1] - h[i]-1$
    \ \COMMENT {$0\leq d\leq\cur$}
    \STATE $D[i+1,k]\gets0$,\quad  $D[i,k]\gets D[i,k]+d$
    \ \COMMENT{transfer $d$ vertical steps to $P_i$}
    \STATE $h[i+1] \gets h[i+1]-d$,\quad  $h[i] \gets h[i]+d$
    \COMMENT {adapt entry point into column}
    \FOR {$j \FROM k-1 \DOWNTO 0$}
    \STATE $cur \gets cur + B[i+1,j] - B[i,j]$
    \IF {$cur<d$}
    \STATE $d\gets cur$
    \STATE $B[i,j]\gets0,\quad B[i+1,j]\gets1$
    \ENDIF \ENDFOR
  \end{algorithmic}
\end{algorithm}

We can now formulate somewhat more formally what was proved above about the
forward and backward operations, as statement about the given procedures. For
conciseness we denote by $\Pathfam(n)$ the set of pairs of matrices $(B,D)$
where $B$ is strictly lower triangular $[n]\times[n]$ matrix with entries in
$\{0,1\}$, while $D$ is weakly lower triangular $[n]\times[n]$ matrix with
entries in~$\N$.

The procedures obviously only inspect and alter a small part of these
matrices, but there is no need to make explicit mention of that fact.
The fact that, as we proceed along the path~$P_i$, the level \emph{decreases}
by the values of $B_{i,j}$ and $D_{i,j}$ encountered, has as consequence that
the inequalities below are in the opposite direction as the corresponding
comparison of the levels of two paths. Also we have chosen to leave out
the respective initial levels $i$ and $i+1$ of the paths $P_i$~and~$P_{i+1}$
from the expressions, so when interpreting the inequalities as comparisons of
levels, one should take into account the difference in offset.

\begin{prop} \label{basic bijection}
  For $0\leq k\leq i<n-1$, procedure \ref{op-forward} defines a bijection, and
  procedure \ref{op-backward} defines the inverse bijection, between on one
  hand the set of pairs $(B,D)\in\Pathfam(n)$ satisfying
  \begin{align*}
    D_{i+1,k}&=0,\quad\text{and}\\
    \sum\nolimits_{j'=0}^{j-1}B_{i+1,j'}& \leq D_{i,k}+
    \sum\nolimits_{j'=0}^{j-1}B_{i,j'}, \qquad\text{for $0\leq j\leq k$,}
    \end{align*}
  and on the other hand the set of pairs
  $(B,D)\in\Pathfam(n)$ satisfying
  \begin{align*}
    \sum\nolimits_{j'=0}^{j-1}B_{i+1,j'}&\leq
    \sum\nolimits_{j'=0}^{j-1}B_{i,j'},\qquad\text{for $0\leq j<k$, and}\\
    \sum\nolimits_{j=0}^{k-1}B_{i+1,j}+D_{i+1,k}&\leq
             \sum\nolimits_{j=0}^{k-1}B_{i,j}.
  \end{align*}
  The relations $h_{i'} = i'-\sum_{j=0}^{k-1}B_{i',j}$ for $i'=i,i+1$ are assumed
  to hold initially in procedure \ref{op-backward}, and continue to hold after
  its execution. \qed
\end{prop}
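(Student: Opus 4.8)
The plan is to read the proposition as the matrix-encoding formalisation of the forward and backward operations already analysed in the preceding discussion, and to supply the dictionary linking the two. First I would fix that dictionary: under the tacit convention that rows $i,i+1$ of $B$ carry no vertical steps in columns $j<k$, these rows record the horizontal/diagonal step sequences of the Schr\"oder paths $P_i,P_{i+1}$, so that the heights at which they enter columns are $h_0(j)=i-\sum_{j'<j}B_{i,j'}$ and $h_1(j)=(i+1)-\sum_{j'<j}B_{i+1,j'}$ for $0\le j\le k$, while $D_{i,k},D_{i+1,k}$ count the vertical steps in column~$k$; in particular the quantity $h_{i'}=i'-\sum_{j<k}B_{i',j}$ stored in the auxiliary array is the height $h_{i'-i}(k)$ at which $P_{i'}$ enters column~$k$. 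With this in hand the four inequalities and the two procedures get matched to the objects $d_j$, $h_\delta$, $h'_\delta$ of the text.

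Second, I would check that procedure~\ref{op-forward} implements the forward operation. The key is the loop invariant that after the iteration for index~$j$ one has $\textit{cur}=h_0(j+1)+1-h_1(j+1)$ and $d=d_{j+1}$ in the sense of~(\ref{djdef}); this is immediate by induction, using that the increment $B[i+1,j]-B[i,j]$ at that iteration still reads the original entries (the conditional swap happens afterwards, and swaps at earlier indices do not touch column~$j$). Hence the test $\textit{cur}>d$ fires exactly when $d_j<d_{j+1}$, and at such a $j$ one gets $h_0(j+1)-h_0(j)>h_1(j+1)-h_1(j)$, i.e.\ $B_{i,j}=0$ and $B_{i+1,j}=1$, so the assignment is the legitimate interchange discussed after~(\ref{fprimedef}). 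At loop exit $d=d_k$, and the last two lines realise $D'_{i,k}=D_{i,k}-d_k$, $D'_{i+1,k}=d_k$, so that altogether one gets the passage $(h_0,h_1)\mapsto(h'_0,h'_1)$ of~(\ref{fprimedef}) plus the transfer of $d_k$ vertical steps. Thus procedure~\ref{op-forward} performs the forward operation (and leaves all other matrix entries fixed) whenever $(P'_i,P'_{i+1})$ exists, the existence clause ``$P_i$ has at least $d_k$ vertical steps in column~$k$'' being $d_k\le D_{i,k}$; this is what the second family of source inequalities says, after rewriting $\sum_{j'<j}B_{i+1,j'}-\sum_{j'<j}B_{i,j'}=h_0(j)+1-h_1(j)$ and maximising over $0\le j\le k$, while the first source condition $D_{i+1,k}=0$ is the hypothesis that $P_{i+1}$ has no vertical steps in column~$k$.

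Third, I would identify the target set and procedure~\ref{op-backward}. The same rewriting turns the target inequalities into $h_0(j)<h_1(j)$ for $j<k$ and $h_0(k)<h_1(k)-D_{i+1,k}$; since for $j<k$ each path meets column~$j$ in the single point $(h_\delta(j),j)$, and in column~$k$ occupies the heights $h_\delta(k),\dots,h_\delta(k)-D_{i+\delta,k}$, these inequalities say precisely, given the starting heights $i<i+1$, that $\supp(P_i)$ and $\supp(P_{i+1})$ are disjoint up to and including column~$k$. So the two sets of the proposition are the domain and codomain of the forward operation exactly as in the text, and that operation is a bijection between them: disjointness of the image follows from~(\ref{fprimeineq}) with $d_j\ge0$, and the text verifies that both composites with the backward operation are the identity. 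For procedure~\ref{op-backward} I would check in the same spirit that $d\gets D[i+1,k]$ recovers $d=d_k$ (valid because $P_{i+1}$, in its role as $P'_{i+1}$, has no vertical steps in column~$k$), that the pre-loop assignments carry out~(\ref{fdef}) at $j=k$ and transfer the $d$ vertical steps back, and that the descending loop maintains the invariant $\textit{cur}=h'_1(j)-h'_0(j)-1$, $d=d_j$ of~(\ref{djdefx}), firing at exactly the indices where $d_j<d_{j+1}$ and there reversing the forward interchanges. The claim about the $h_{i'}$ belongs to the same bookkeeping: the relation $h_{i'}=i'-\sum_{j<k}B_{i',j}$ holds on entry since $h_{i'}$ is stored as the entry height, and it is preserved because the $d$ interchanges of the loop lower $\sum_{j<k}B_{i,j}$ by $d$ and raise $\sum_{j<k}B_{i+1,j}$ by $d$, exactly compensating the pre-loop updates $h[i]\gets h[i]+d$, $h[i+1]\gets h[i+1]-d$.

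Finally, since procedures~\ref{op-forward} and~\ref{op-backward} alter only entries of rows $i,i+1$ in columns $\le k$ and act as the identity elsewhere, and---by the above together with the text's analysis---realise mutually inverse maps between the two described subsets of $\Pathfam(n)$, the proposition follows. I expect the one real difficulty to be showing that the forward and backward loops mark the \emph{same} interchange positions; this is the content of the paragraph distinguishing an increase $d_j<d_{j+1}$ from a one-time rise of $h_1-h_0$ that never falls back, and it is precisely where the hypothesis $D_{i+1,k}=0$ enters, as that is what lets the backward operation read off $d_k$. Everything else is rearranging the given inequalities and tracking two integer variables around a loop.
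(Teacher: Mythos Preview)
Your proposal is correct and follows essentially the same route as the paper: the proposition is stated with a \qed\ precisely because it is the matrix-encoding reformulation of the forward/backward analysis already carried out in the prose of section~5, and your write-up supplies exactly the dictionary and loop-invariant verifications that the paper leaves implicit. The identification $h_0(j)+1-h_1(j)=\sum_{j'<j}B_{i+1,j'}-\sum_{j'<j}B_{i,j'}$, the reading of the source and target inequalities as ``enough vertical steps'' and ``disjoint up to column~$k$'', and the appeal to the paper's argument that both procedures reconstruct the same sequence $(d_0,\ldots,d_k)$ are all on point.
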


We now build an algorithmic bijection corresponding to theorem~\ref{path thm}
by repeated application of basic operations. The iteration itself is
straightforward, although a bit of work will remain to show that the goal is
attained. For a given value of~$k$, we shall start calling $\disj{k}$ to make
$P_k$ and~$P_{k+1}$ disjoint (recall that $P_k$ does not extend beyond
column~$k$), then $\disj\kp$ to make $P_{k+1}$ and~$P_{k+2}$ disjoint up to
column~$k$, and so forth up to $\disj{n-2}$ to make the last two paths
$P_{n-1}$ and~$P_{n-2}$ disjoint up to column~$k$. We shall show that the
disjointness obtained in a step is not lost in the following step, so this
iteration will result in paths $P_k,\ldots,P_{n-1}$ being disjoint up to
column~$k$. Placing the iteration within another iteration, in which $k$
decreases from $n-2$ to $0$, we ensure that all paths that extend beyond
column $k$ are already disjoint when this inner iteration starts. Since the
parts beyond column $k$ are unaffected by it, the inner iteration will in fact
achieve that $P_k,\ldots,P_{n-1}$ are entirely disjoint, and at the end of the
outer iteration the whole $n$-family will be disjoint. Note that in general
applying $\disj{i}$ \emph{will destroy} the disjointness of $P_{i+1}$ and
$P_{i+2}$ up to column~$k$, which explains why the inner iteration is needed.

Since $\disj{i}$ will set the value of $D_{i+1,k}$ for use in the subsequent
$\disj{\ip}$, all that remains to do is to ensure that $D_{k,k}$ is set
correctly before the inner iteration at~$k$ starts; this is easy since the
number of final vertical steps in the cliff-shaped path $P_k$ is equal to its
number of horizontal steps. We obtain the combing algorithm described in
procedure~\ref{combing}.

\begin{algorithm}
  \caption{Combing algorithm from cliff-shaped to disjoint $n$-families}
  \label{combing}
  \begin{algorithmic}
    \FOR {$k \FROM n-1 \DOWNTO 0$}
    \STATE $D[k,k]\gets k-\sum_{0\leq j<k}B[k,j]$
    \ \COMMENT {initialise diagonal entry}
    \FOR {$i \FROM k \UPTO n-2$}
    \STATE $\disj{i}$
    \ENDFOR
    \ENDFOR
  \end{algorithmic}
\end{algorithm}

A first verification to be made is that the condition of
proposition~\ref{basic bijection} is satisfied whenever $\disj{i}$ is invoked.
This is clear initially when $i=k$, since the initialisation of $D_{k,k}$
gives that $D_{k,k}+\sum_{j'=0}^{j-1}B_{i,j'}=k-\sum_{j'=j}^{k-1}B_{i,j'}\geq
j$. To prove that the inequality is satisfied when $i>k$, we need the
hypothesis that the paths $P_i$ and $P_{i+1}$ were disjoint just before
$\disj{i-1}$ was executed. This means that one has
$\sum_{j'=0}^{j-1}B_{i+1,j'}\leq\sum_{j'=0}^{j-1}B_{i,j'}$ for $0\leq j\leq
k$ at the start of $\disj{i-1}$. If $d=D_{i,k}$ is the final value obtained by
this variable during that execution, then for any such~$j$ the value of
$\sum_{j'=0}^{j-1}B_{i,j'}$ is decreased by at most~$d$ by the procedure, and
since the values $B_{i+1,j'}$ are unaffected, one obtains
$\sum_{j'=0}^{j-1}B_{i+1,j'}\leq\sum_{j'=0}^{j-1}B_{i,j'}+d$ at the end of
$\disj\imn$, and therefore at the beginning of $\disj{i}$; this is the
condition required.

A reverse (uncombing) algorithm is also easy to formulate. Here both
$B$~and~$D$ have well defined values initially, and the only initialisation
required is that of the vector~$h$, which should give the levels at which
paths $P_i$ and $P_{i+1}$ enter column~$k$ at the point where
$\clify{i}$ is invoked, as mentioned in proposition~\ref{basic
  bijection}. Since procedure~\ref{op-backward} takes care of updating the
vector~$h$ according to the changes to~$B$ it produces, these initialisations
are easily integrated into the uncombing algorithm, which only needs to take
care of the passage from column $k-1$~to~$k$. We obtain the algorithm
described in procedure~\ref{uncombing}.

\begin{algorithm}
  \caption{Uncombing algorithm from disjoint to cliff-shaped $n$-families}
  \label{uncombing}
  \begin{algorithmic}
    \FOR {$k \FROM 0 \UPTO n-1$}
    \FOR {$i \FROM n-1 \DOWNTO k$}
    \IF {$k=0$}
    \STATE $h[i]\gets i$ \ \COMMENT {initialise height function for column $0$}
    \ELSE
    \STATE $h[i]\gets h[i] - B[i,k-1]$
    \ \COMMENT {adapt height function to column $k$}
    \ENDIF
    \IF {$i<n-1$}
    \STATE $\clify{i}$
    \ENDIF \ENDFOR \ENDFOR
  \end{algorithmic}
\end{algorithm}

For this algorithm it is easy to see that in the inner loop for~$k$, the
condition of proposition~\ref{basic bijection} is satisfied, provided that the
paths $P_k,\ldots,P_{n-1}$ are disjoint at the start of the loop. Indeed the
condition when calling $\clify{i}$ precisely requires the disjointness of
$P_i$ and $P_{i+1}$, and although a preceding $\clify{\ip}$ may have changed
the entries $B_{i+1,j}$ that describe $P_{i+1}$, this can only have made them
smaller, moving $P_{i+1}$ away from $P_i$. In column~$k$ the vertical
steps introduced come \emph{before} the unchanging point where $P_{i+1}$
leaves that column, so this does not endanger disjointness with~$P_i$ either.
On the other hand it is not obvious that $P_{k+1},\ldots,P_{n-1}$ are again
disjoint at the end of the inner loop (and of course $P_k$ in general
\emph{will not} be disjoint from them). This brings us to the main technical
verification that needs to be done in order to conclude that we have described
well defined combing and uncombing bijections.

\begin{prop} \label{column bijection}
  Let $\Pathfam(n,k)$ denote the subset of $\Pathfam(n)$ of pairs $(B,D)$
  encoding $n$-families without any vertical steps in any non-final column
  before column~$k$ (so $D_{i,j}=0$ whenever $0\leq j<k$ and $j<i<n$) and for
  which the supports of the paths $P_k,\ldots,P_{n-1}$ are all disjoint. Then
  for each $k<n$, the inner loop at $k$ of procedure~\ref{combing} defines a
  bijection, and the one of procedure~\ref{uncombing} defines the inverse
  bijection, between $\Pathfam(n,\kp)$ and $\Pathfam(n,k)$
\end{prop}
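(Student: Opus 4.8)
The plan is to pin down the forward (combing) map $\Phi\colon\Pathfam(n,\kp)\to\Pathfam(n,k)$ realised by the inner loop at~$k$ of procedure~\ref{combing}, and then obtain the statement about the uncombing inner loop by a formal inversion argument. That each call of $\disj{i}$ inside the loop satisfies the hypotheses of proposition~\ref{basic bijection} has already been argued in the discussion preceding that proposition; note it uses only the initialisation of $D_{k,k}$ (for $i=k$) and the disjointness of the input paths $P_{k+1},\dots,P_{n-1}$ (for $i>k$), since when $\disj{i}$ is reached no row $\geq i$ has yet been altered and $P_i,P_{i+1}$ are still in their input state. Moreover the loop never changes any $D_{i',j}$ with $j<k$, and leaves every path untouched beyond column~$k$ (where they are already disjoint, $P_k$ having no points there at all); so the only substantive thing to prove about $\Phi$ is that at the end of the loop $P_k,\dots,P_{n-1}$ are pairwise disjoint up to and including column~$k$, and the whole extra content of the proposition reduces to this together with the corresponding assertion about the backward loop.

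The crux is the following \emph{bouncing lemma}: once $\disj{i}$ has run, making $P_i$ and $P_{i+1}$ disjoint up to column~$k$, the immediately following call $\disj{\ip}$ preserves this; since no later call touches row~$i$ or~$i+1$, it then persists to the end of the loop. Let $(d_0,\dots,d_k)$ be the weakly increasing sequence~(\ref{djdef}) computed by $\disj{i}$, so that $d_k=D_{i+1,k}$ afterwards. By~(\ref{fprimeineq}), after $\disj{i}$ one has $\sum_{j'<j}B_{i,j'}-\sum_{j'<j}B_{i+1,j'}\geq d_j$ for $0\leq j\leq k$, i.e.\ the two paths are left at least $d_j$ apart in column~$j$, while $\disj{i}$ leaves $P_{i+1}$'s exit point from column~$k$ unchanged. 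Now $\disj{\ip}$ drives $P_{i+1}$ back towards $P_i$, raising $\sum_{j'<j}B_{i+1,j'}$ by the value $e_j$ of its own sequence~(\ref{djdef}) while again fixing $P_{i+1}$'s exit point from column~$k$; so the remaining separation in column~$j$ becomes at least $d_j-e_j$, and it suffices to show $e_j\leq d_j$. But $e_j$ is the running maximum over $m\leq j$ of $\sum_{j'<m}B_{i+2,j'}-\sum_{j'<m}B_{i+1,j'}$ computed from the still-untouched original $P_{i+2}$ and the post-$\disj{i}$ path $P_{i+1}$; writing the latter's partial sums as their original values minus $d_m$, and using that the original $P_{i+1},P_{i+2}$ (both among the disjoint input paths) satisfy $\sum_{j'<m}B_{i+2,j'}\leq\sum_{j'<m}B_{i+1,j'}$, each term is $\leq d_m\leq d_j$, so $e_j\leq d_j$. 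The separation in columns $<k$ thus survives, and the incidence in column~$k$ survives because neither path's exit point from that column is moved. I expect this lemma to be the only genuinely delicate step; everything else is bookkeeping.

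Granting the lemma, after the loop all consecutive pairs $P_i,P_{i+1}$ with $k\leq i\leq n-2$ are disjoint up to column~$k$. Two such paths which are disjoint up to column~$k$ and start with the higher-indexed one above the other are in fact, in every one of the columns $0,\dots,k$, such that every point of $P_{i+1}$ lies strictly above every point of $P_i$ (a short induction on the column, using that in columns before $k$ there are no vertical steps and that steps lower the level by at most one, while in column~$k$ the paths occupy intervals of lattice points); hence this relation is transitive among $P_k,\dots,P_{n-1}$, and combining with their untouched, already disjoint parts beyond column~$k$ gives $P_k,\dots,P_{n-1}$ pairwise disjoint. So $\Phi$ is a well-defined map $\Pathfam(n,\kp)\to\Pathfam(n,k)$.

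Finally, the inner loop at~$k$ of procedure~\ref{uncombing} is well-defined on all of $\Pathfam(n,k)$: when $\clify{i}$ is reached row~$i$ is still untouched while $P_{i+1}$ has at most been moved away from $P_i$ by the preceding $\clify{\ip}$ (its $B_{i+1,\cdot}$ entries only decreased, and any vertical steps inserted in column~$k$ lie above its unmoved exit point), so $P_i,P_{i+1}$ are disjoint up to column~$k$ as proposition~\ref{basic bijection} demands, and the loop maintains the vector~$h$ exactly as prescribed there; call the resulting map $\Psi$, defined on $\Pathfam(n,k)$. Because $\disj{i}$ and $\clify{i}$ are mutually inverse bijections of the kind in proposition~\ref{basic bijection} and the two inner loops invoke them in opposite orders, an induction on the number of operations executed shows $\Psi\circ\Phi=\mathrm{id}$ on $\Pathfam(n,\kp)$ and $\Phi\circ\Psi=\mathrm{id}$ on $\Pathfam(n,k)$: at each step the two rows about to be processed carry exactly the entries left by the matching operation of the other loop, which by proposition~\ref{basic bijection} lies in the domain of its inverse, so the two operations cancel while leaving every other row fixed (in particular each forward step in $\Phi\circ\Psi$ inherits its applicability from the backward step it undoes). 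Therefore $\Phi$ is a bijection with inverse $\Psi$, and in particular $\Psi$ maps $\Pathfam(n,k)$ onto $\Pathfam(n,\kp)$, which is the remaining claim.
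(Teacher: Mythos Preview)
Your forward direction is fine and matches the paper's: the ``bouncing lemma'' showing $e_j\leq d_j$ by comparing the running maxima is exactly the argument the paper gives (in the $h$-function notation), and the rest of the verification that $\Phi$ lands in $\Pathfam(n,k)$ is correct.

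The gap is in the backward direction. You correctly show that $\Psi$ is well-defined on $\Pathfam(n,k)$ and that the \emph{sequence of operations} constituting $\Phi$ can be applied to $\Psi(x)$ and returns~$x$. But this does not prove $\Psi(x)\in\Pathfam(n,\kp)$: the hypotheses of proposition~\ref{basic bijection} for $\disj{i}$ concern only rows $i$ and $i+1$, so the operational domain of $\Phi$ is strictly larger than $\Pathfam(n,\kp)$. Knowing that each $\disj{i}$ in $\Phi\circ\Psi$ is applicable because it is undoing the matching $\clify{i}$ tells you nothing about whether the paths $P_{k+1},\dots,P_{n-1}$ in $\Psi(x)$ are mutually disjoint. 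Your final sentence ``therefore $\Phi$ is a bijection with inverse $\Psi$, and in particular $\Psi$ maps $\Pathfam(n,k)$ onto $\Pathfam(n,\kp)$'' assumes what is to be proved: writing $\Phi\circ\Psi=\mathrm{id}$ as a map identity presupposes $\Psi(x)\in\mathrm{dom}(\Phi)=\Pathfam(n,\kp)$, and no cardinality argument is available since equinumerosity of these sets is the goal.

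The paper closes this gap with a second bouncing lemma, symmetric to yours: after $\clify{\ip}$ possibly destroys the disjointness of $P_{i+1}$ and $P_{i+2}$, the subsequent $\clify{i}$ restores it. One shows $d_j\geq e_j$ by \emph{descending} induction on~$j$, starting from $d_k\geq e_k$ (because $\clify{\ip}$ transfers its $e_k$ vertical steps into $D_{i,k}$, which then contributes to $d_k$), and using~(\ref{djdefx}) in the form $d_j=\min(d_{j+1},h'_1(j)-h'_0(j)-1)$ together with the original disjointness $h''_1(j)>h'_0(j)$. This is not bookkeeping that can be absorbed into a formal inversion; it is the substantive half of the proof that you are missing.
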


\begin{proof}
  We have already seen that, when starting in the forward direction from an
  element of $\Pathfam(n,\kp)$, the calls $\disj{i}$ in the inner loop of
  procedure~\ref{combing} are invoked under the proper conditions: the number
  of units of~$D_{i,k}$ (vertical steps) that such a call transfers to
  $D_{i+1,k}$ does not exceed the value of $D_{i,k}$ at that point. Starting
  in the backward direction from an element of $\Pathfam(n,k)$, the inner loop
  of procedure~\ref{uncombing} will also invoke the calls of $\clify{i}$ under
  the proper conditions, and they will transfer all units from $D_{i+1,k}$ to
  $D_{i,k}$, so that in the end all units of column~$k$ of~$D$ have been
  combined into~$D_{k,k}$. The only point left to prove is the disjointness of
  the supports of the indicated set of paths at the completion of the inner
  loop, in both directions. This was assumed and remains unchanged beyond
  column~$k$, and for column~$k$ the verifications were done in
  proposition~\ref{basic bijection} (the disjointness in that column obtained
  by $\disj{i}$ is not endangered by a following $\disj{\ip}$). So only the
  parts of the paths in columns $j<k$ need to be considered.

  It is part of proposition~\ref{basic bijection} that after $\disj{i}$ the
  paths $P'_i$ and $P'_{i+1}$ have disjoint supports up to column~$k$, but (if
  $i\neq n-2$) the subsequent application of $\disj{\ip}$ may move $P'_{i+1}$
  in the direction of~$P'_i$ again, and we need to show that the resulting
  path $P''_{i+1}$ nevertheless stays disjoint from~$P'_i$. Let as before
  $h_0,h_1$ be the functions describing the initial paths $P_i$ and $P_{i+1}$,
  with $h'_0,h'_1$ the ones after modification by $\disj{i}$; let $h_2$
  similarly describe the initial path $P_{i+2}$, and call the functions
  obtained after $\disj{\ip}$ modifies $h'_1$ and~$h_2$ respectively $h''_1$
  and~$h'_2$. Just as $\disj{i}$ determines a sequence $(d_0,\ldots,d_k)$
  there is a sequence determined by $\disj{\ip}$ that we call
  $(e_0,\ldots,e_k)$; then one has equation (\ref{fprimedef}) and similarly
  $h''_1(j)=h'_1(j)-e_j$, and $h'_2(j)=h_2(j)+e_j$ for $0\leq j\leq k$. From
  (\ref{fprimeineq}) we have $h'_0(j)<h'_1(j)-d_j$ and we wish to show
  $h'_0(j)<h''_1(j)=h'_1(j)-e_j$. It will therefore suffice to show that
  $e_j\leq d_j$ for $0\leq j\leq k$. We shall do so by induction on~$j$; the
  starting case $e_0=0=d_0$ is trivial, so suppose $j>0$. Then the equivalent
  of (\ref{djdef}) for $e_j$ can be written
  $e_j=\max(e_{j-1},h'_1(j)+1-h_2(j))$. Now by induction $e_{j-1}\leq
  d_{j-1}\leq d_j$, while from the hypothesis $h_1(j)<h_2(j)$ that $P_{i+1}$
  and $P_{i+2}$ are initially disjoint we get
  $h'_1(j)+1-h_2(j)=d_j+h_1(j)+1-h_2(j)\leq d_j$ as well, so indeed $e_j\leq
  d_j$.

  Having shown that the inner loop at $k$ of procedure~\ref{combing} maps
  $\Pathfam(n,\kp)$ to $\Pathfam(n,k)$, we must also prove that conversely the
  inner loop at $k$ of procedure~\ref{uncombing} maps $\Pathfam(n,k)$ to
  $\Pathfam(n,\kp)$. The situation is a bit different, in that the
  disjointness of $P_{i+2}$ and $P_{i+1}$ that we need to show (for $k\leq
  i<n-2$) is first potentially destroyed by $\clify{\ip}$, and then must be
  restored by $\clify{i}$. We can use the same notation as above, but the
  hypotheses differ: we assume that $\clify{\ip}$ transforms $(h''_1,h'_2)$
  into $(h'_1,h_2)$ while producing (from right to left) a sequence
  $(e_0,\ldots,e_k)$, and then $\clify{i}$ transforms $(h'_0,h'_1)$ into
  $(h_0,h_1)$ while producing a sequence $(d_0,\ldots,d_k)$. Again the key
  point is establishing $d_j\geq{e_j}$ for $0\leq j\leq k$, since analogously
  to~(\ref{fprimeineq}) one has $h'_1(j)=h''_1(j)+e_j<h'_2(j)$, and so the
  condition $d_j\geq{e_j}$ will imply the desired inequality
  $h_1(j)=h'_1(j)-d_j<h'_2(j)-e_j=h_2(j)$. This time we use descending
  induction on $j$; the initial case $e_k\leq d_k$ is a consequence of the
  fact that $\clify\ip$ transfers all $e_k$ vertical steps of~$P'_{i+1}$ to
  $P'_i$, where they contribute to~$d_k$. In the induction step we use
  equation~(\ref{djdefx}) in the form $d_j=\min(d_{j+1},h'_1(j)-h'_0(j)-1)$,
  which allows us to prove $d_j\geq{e_j}$ in two parts, as before: by
  induction $d_{j+1}\geq e_{j+1}\geq e_j$, and since $h''_1(j)>h'_0(j)$ (the
  hypothesis that the original paths $P''_{i+1}$ and $P'_i$ are disjoint) one
  also has $h'_1(j)-h'_0(j)-1=h''_1(j)+e_j-h'_0(j)-1\geq e_j$. This completes
  the proof.
\end{proof}

We can now state our main result, a bijective version of theorem~\ref{path
  thm}.

\begin{thm}
  The algorithm of procedure~\ref{combing} defines a bijection, and the
  algorithm of procedure~\ref{uncombing} defines the inverse bijection,
  between on hand the set of cliff-shaped Schr\"oder $n$-families, encoded by
  the corresponding strictly lower triangular matrices~$B$ with entries
  in~$\{0,1\}$, and on the other hand the set of disjoint Schr\"oder
  $n$-families, encoded by the corresponding pairs~$(B,D)$.
\end{thm}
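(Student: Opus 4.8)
The plan is to recognise procedure~\ref{combing} as the composite of its outer-loop iterations, apply Proposition~\ref{column bijection} to each of them, and thereby exhibit the whole algorithm as a chain of bijections; procedure~\ref{uncombing} will then be visibly the reverse chain, hence the inverse map. To this end write $f_k$ for the body of the outer loop of procedure~\ref{combing} at a fixed value of~$k$ (the initialisation of $D_{k,k}$ together with the ensuing loop over~$i$), and $g_k$ for the body of the outer loop of procedure~\ref{uncombing} at~$k$. Since the outer loop of procedure~\ref{combing} runs $k$ from $n-1$ down to~$0$, executing it applies $f_{n-1}$, then $f_{n-2}$, \dots, then~$f_0$, so the algorithm computes the map $f_0\circ f_1\circ\cdots\circ f_{n-1}$; likewise, as the outer loop of procedure~\ref{uncombing} runs $k$ from $0$ up to~$n-1$, that algorithm computes $g_{n-1}\circ\cdots\circ g_1\circ g_0$. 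By Proposition~\ref{column bijection} — whose hypotheses are met, as was checked above, and whose proof already carries out the delicate ``bounce-back'' inductions guaranteeing that the disjointness of $P_k,\dots,P_{n-1}$ is recovered at the end of each loop — the map $f_k$ is a bijection $\Pathfam(n,\kp)\to\Pathfam(n,k)$ for $0\le k<n$, with inverse~$g_k$. The sources and targets therefore telescope along
\[
  \Pathfam(n,n)\xrightarrow{\,f_{n-1}\,}\Pathfam(n,n-1)\xrightarrow{\,f_{n-2}\,}\cdots\xrightarrow{\,f_0\,}\Pathfam(n,0),
\]
so procedure~\ref{combing} defines a bijection $\Pathfam(n,n)\to\Pathfam(n,0)$, and procedure~\ref{uncombing} defines $g_{n-1}\circ\cdots\circ g_0=(f_0\circ\cdots\circ f_{n-1})^{-1}$, which is precisely its inverse.

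It then remains only to identify the two ends of this chain with the sets named in the theorem. At the lower end, putting $k=0$ in the definition of $\Pathfam(n,k)$ makes the ``no vertical steps in non-final columns before column~$0$'' clause vacuous, leaving just the requirement that $P_0,\dots,P_{n-1}$ be disjoint; thus $\Pathfam(n,0)$ is exactly the set of pairs $(B,D)$ encoding disjoint $n$-families, and since every disjoint $n$-family is a Schr\"oder $n$-family this is the set of disjoint Schr\"oder $n$-families encoded by pairs~$(B,D)$. At the upper end, note first that $\Pathfam(n,n)=\Pathfam(n,n-1)$ (the only extra clause when passing from $\kp=n$ to $k=n-1$ would constrain indices~$i$ with $n-1<i<n$, of which there are none). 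An element $(B,D)$ of this set encodes an $n$-family with $D_{i,j}=0$ for every $j<i$; and since a valid encoding of an $n$-family forces $\sum_{j<i}B_{i,j}+\sum_{j\le i}D_{i,j}=i$, this determines $D_{i,i}=i-\sum_{j<i}B_{i,j}$, so $D$ is a function of~$B$ and each $P_i$ is cliff-shaped. Hence $\Pathfam(n,n)$ is exactly the set of cliff-shaped $n$-families; these are automatically Schr\"oder, and discarding the now-redundant matrix~$D$ identifies the set with the set of strictly lower triangular $\{0,1\}$-matrices~$B$. Inserting these two identifications into the bijection of the previous paragraph yields the theorem.

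I do not anticipate a genuine obstacle here: the substantive work lies in Proposition~\ref{column bijection}, and what remains is bookkeeping. Two points deserve a moment's care. The first is the order of composition: the outer loop of procedure~\ref{combing} decreases~$k$ while that of procedure~\ref{uncombing} increases it, and it is exactly this reversal that makes the two composites mutually inverse rather than merely both bijective. The second is the identification of the boundary sets $\Pathfam(n,n)$ and $\Pathfam(n,0)$, which rests on the observations from Section~2 that cliff-shaped Schr\"oder-type paths are always Schr\"oder paths and that disjoint $n$-families are always Schr\"oder $n$-families, so that the word ``Schr\"oder'' in the two sets of the theorem is a consequence rather than an extra hypothesis (the cases $n\le1$ being trivial anyway).
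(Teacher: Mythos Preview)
Your proposal is correct and follows essentially the same approach as the paper: identify the endpoints $\Pathfam(n,n)$ and $\Pathfam(n,0)$ with cliff-shaped and disjoint Schr\"oder $n$-families respectively, then realise procedure~\ref{combing} as the composite of the bijections $\Pathfam(n,\kp)\to\Pathfam(n,k)$ supplied by Proposition~\ref{column bijection}, with procedure~\ref{uncombing} as the reverse composite. Your write-up is more explicit than the paper's about the order of composition and the identification of the boundary sets, but the argument is the same.
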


\begin{proof}
  After pairing each $B$ corresponding to a cliff-shaped $n$-family
  with the corresponding diagonal matrix $D$ with diagonal entries
  $D_{k,k}=k-\sum_{0\leq{j}<i}B_{i,k}$, the set of cliff-shaped Schr\"oder
  $n$-families corresponds to $\Pathfam(n,n)$ and the set of disjoint
  Schr\"oder $n$-families corresponds to $\Pathfam(n,0)$. Now
  procedure~\ref{combing} realises the composite map
  \begin{equation}
    \Pathfam(n,n)\to\Pathfam(n,n-1)\to\cdots\to\Pathfam(n,0)
  \end{equation}
  where the individual maps are the bijections of proposition~\ref{column
    bijection}, and procedure~\ref{uncombing} realises
  the reverse composition of the corresponding inverse bijections.
\end{proof}

It may be observed that the initial map $\Pathfam(n,n)\to\Pathfam(n,n-1)$ and
the final map $\Pathfam(n,1)\to\Pathfam(n,0)$ are in fact identity maps: the
sets of families involved are the same in both cases (with just slightly
different descriptions), namely that of the cliff-shaped $n$-families
respectively that of the disjoint $n$-families, and our procedures only perform
some administrative actions without any changes to the paths for $k=n-1$ and
for $k=0$.

\section{Some complements and discussion}

As we have mentioned in the introduction, and illustrated in
figure~\ref{path-tiling}, there is a bijection between disjoint $n$-families
and tilings of the Aztec diamond of order~$n-1$. It is not easy to attribute
the discovery of this bijection clearly: a bijection between families of paths
and domino tilings of the Aztec diamond is first mentioned in~\cite{EuFu}, in
the proof of their proposition~2.2; however it is strongly based on a
bijection involving single paths that occurs in a slightly different context,
and whose origin goes back to Sachs and Zernitz~\cite{SaZe}. That context is
originally that of counting dimer coverings (perfect matchings) in a graph
describing the adjacency of squares in the \emph{augmented} Aztec diamond,
obtained from an Aztec diamond of order~$n$ by replacing the $2\times 2n$
rectangle it contains by a $3\times 2n$ rectangle; each such covering
(equivalent to a domino tiling of the augmented Aztec diamond) turns out to be
determined (bijectively) by a path from source to sink in a particular
orientation of the graph that is illustrated in figure~\ref{aug-Aztec-graph}.
\begin{figure}
  \includegraphics[width=0.4\textwidth]{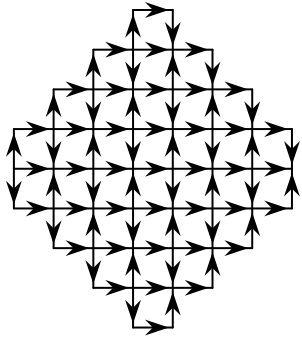}
  \caption{Directed graph for augmented Aztec diamond, order 4}
  \label{aug-Aztec-graph}
\end{figure}
The observation that those paths can be replaced by
paths with three types of steps, two of which are not parallel but at a
$45^\circ$-angle with the corresponding dominoes (our Schr\"oder-type paths),
is due to Dana Randall (unpublished), and is mentioned in \cite{Ciucu} and
\cite[p.~277 (6.49~a)]{EC2}.

The illustrations of this phenomenon, like our figure~\ref{path-tiling}, are
considered so convincing that one does not find in the references we cited
anything more precise than a rule how to associate a path family to a tiling,
with no attempt to formulate a proof of bijectivity of the correspondence.
Even though the proof is indeed straightforward, it is worth while to
formulate one, as this gives the occasion to see just how few assumptions
about the nature of the context are used, so that the argument can prove a
much more general statement. For this reason we give here such a statement and
its proof.

\begin{prop}
  Let $S$ be a finite subset of $\Z^2$, viewed as a set of squares in the
  plane, with $B=\setof{(i,j)\in S}{i\equiv j\pmod2}$ and $W=S\setminus B$ its
  subsets of black respectively white squares. Define sets $E,I,X$ of
  vertical edges with a white square~$w$ to their left and a black square~$b$
  to their right, where $E$ (the ``entries'') is the set of such edges with
  $w\notin W$ and $b\in B$, the set $X$ (the ``exits'') is that of such edges
  with $w\in W$ and $b\notin B$, and $I$ (the ``interior edges'') is the set
  of such edges with $w\in W$ and $b\in B$; formally (identifying an edge with
  the square to its right)
  \begin{align*}
    E&=\setof{b\in B}{b-(0,1)\notin W},\\
    I&=\setof{b\in B}{b-(0,1)\in W},\\
    X&=\setof{b\in \Z^2\setminus B}{b-(0,1)\in W}.
  \end{align*}
  There is a bijection between the set of domino tilings of $S$ and the set of
  families of paths, using steps chosen from
  $\{(1,1,),(0,2),(-1,1)\}$, such that each entry in $E$ is connected by some
  path to an exit in~$X$ and vice versa, with paths passing through elements
  of~$I$ only, and such that each element lies on at most one path.
\end{prop}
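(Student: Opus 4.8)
\emph{The plan} is to write down the tiling-to-paths map and its inverse explicitly, each read off locally from the other through a dictionary between the four colouring-types of dominoes and the three types of path step. First I would fix terminology: every vertical edge separates a cell of one colour on its West (smaller second coordinate) side from one of the other colour on its East side; call such an edge \emph{admissible} if its West cell has white parity, its East cell black parity, and at least one of the two cells lies in~$S$, and identify the edge with its East cell~$b$ (which always has black parity). Then $E$, $I$ and $X$ are exactly the admissible edges having, respectively, only the East cell in~$S$, both cells in~$S$, only the West cell in~$S$, and the proposition asks for a bijection between tilings of~$S$ and families of paths on admissible edges, using the steps $(0,2)$, $(1,1)$, $(-1,1)$ (these are the differences $b'-b$ of consecutive edges), that run from edges of~$E$ to edges of~$X$, bijectively, with all intermediate edges in~$I$, and with each admissible edge used at most once.

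\emph{Forward map.} Given a tiling~$T$, each domino covers one black and one white cell and is of exactly one of four kinds. A horizontal domino whose white cell is on the West side simply \emph{blocks} the admissible edge it straddles; each domino of the other three kinds determines a single \emph{segment}, i.e.\ an ordered pair of admissible edges differing by one step: a horizontal domino with black cell~$b$ on the West gives the step $(0,2)$ from the edge~$b$ to the edge~$b+(0,2)$; a vertical domino with black cell~$b$ at the bottom gives the step $(1,1)$ from $b$ to $b+(1,1)$; a vertical domino with black cell~$b$ at the top gives the step $(-1,1)$ from $b$ to $b+(-1,1)$. The bookkeeping to check is: for an admissible edge~$e$ with East cell~$b$, the (unique, if any) domino covering~$b$ either blocks~$e$ --- possible only when $e\in I$ --- or supplies~$e$ with exactly one outgoing segment; dually the domino covering the West cell of~$e$ either blocks~$e$ or supplies exactly one incoming segment; and a blocking occurs at~$e$ from the East side if and only if it occurs from the West side (it is then the same domino $\{b-(0,1),b\}$). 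Hence every edge of~$E$ has out-degree~$1$ and in-degree~$0$, every edge of~$X$ in-degree~$1$ and out-degree~$0$, and every edge of~$I$ either in- and out-degree~$1$ (when unblocked) or~$0$ (when blocked). Since every step strictly increases the second coordinate there are no cycles, so the segments assemble into paths with the required properties; in particular this matches~$E$ bijectively with~$X$.

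\emph{Inverse map.} Starting from such a family, reverse the dictionary: a segment of step $(0,2)$, $(1,1)$, $(-1,1)$ at an edge with East cell~$b$ yields, respectively, the horizontal-black-West, vertical-black-bottom, vertical-black-top domino covering~$b$, while each edge of~$I$ carrying no path yields the horizontal-white-West domino straddling it. The point is that these dominoes tile~$S$. Each black cell $b\in S$ is the East cell of a unique admissible edge $e\in E\cup I$: it is covered once, either by the domino of the segment leaving~$e$ (if~$e$ lies on a path) or by the blocking domino at~$e$ (if $e\in I$ is path-free), and by nothing else, since every reconstructed domino covers a single black cell, namely the East cell of the edge it came from. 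Each white cell $w\in W$ lies immediately West of $b:=w+(0,1)$, the East cell of an admissible edge $e'\in I\cup X$; one checks $e'$ lies on a path unless $e'\in I$ is path-free, and in the first case $w$ is covered once by the domino of the segment \emph{entering}~$e'$, in the second once by the blocking domino at~$e'$, and by nothing else, since each reconstructed domino covers a single white cell, namely the West cell of its associated edge.

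\emph{Conclusion and the hard part.} The two maps are mutually inverse because both are built from the same local correspondence (a segment of a given type at~$b$ $\leftrightarrow$ the correspondingly placed domino covering~$b$; a path-free edge of~$I$ $\leftrightarrow$ the horizontal-white-West domino on it), so composing them in either order is the identity. The only real obstacle is the case analysis hidden in the verifications above: one must check, orientation by orientation, that each of the four domino kinds touches exactly the admissible edges claimed and in the claimed role (incoming segment, outgoing segment, or block), and thereby that the covering conditions hold; keeping track of which cell is East or West, which parity it carries, and which edges a given domino can meet is where all the (elementary) care goes. It is also worth remarking that $|E|=|X|$ is not assumed but emerges from the path structure, so that when $S$ is not tileable both sides of the claimed bijection are simply empty.
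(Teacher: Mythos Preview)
Your argument is correct and follows essentially the same route as the paper's own proof: both define the forward map by reading off, from each domino, the pair $(e,e')$ consisting of the left edge of its black square and the right edge of its white square (your ``blocking'' case is exactly the paper's case $e=e'$), and both define the inverse by assigning to each black square the domino dictated by the outgoing step at its left edge (or the blocking domino if none), with the symmetric check for white squares. Your in-degree/out-degree phrasing and explicit four-way case split make the local verification more visible, but the content is the same.
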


Finiteness is the only hypothesis made for the set of squares for which domino
tilings are considered (we leave it as an exercise to find where it is used
implicitly in the proof below). This means of course that very possibly no
domino tilings exist at all, and therefore no path families. The most obvious
obstruction against the existence of such tiling is a nonzero balance
$\#B-\#W$ between black and white squares; this balance is equal to the
balance $\#E-\#X$ between entry and exit points for the path which clearly
must be zero for path families to exist.

While the expressions for $E,I,X$ in the statement of the proposition identify
vertical edges with a black square to their right with that black square (as
an element of $\Z^2$), our proof be in a geometric language that distinguishes
them as different kinds of objects.

\begin{proof}
  Suppose first that a domino tiling of $S$ is given. We associate to each
  domino~$d$ of the tiling a pair $(e,e')\in(E\union I)\times(I\union X)$,
  which will serve as a step in one of the paths of the corresponding family
  whenever $e\neq e'$: we take $e$ to be the left edge of the black square
  of~$d$, and $e'$ is the right edge of the white square of~$d$. Every edge in
  $E\union I$ occurs as $e$ for a unique domino of the tiling, namely for the
  domino the contains the square $b\in B$ at the right of the edge, and every
  edge in $I\union X$ occurs as $e'$ for a unique domino of the tiling, namely
  for the domino the contains the square $w\in W$ at the left of the edge.
  According to the four possibilities for the orientation and colouring of a
  domino, each such pair $(e,e')$ either satisfies $e=e'$, or that $e'-e$ is
  in the set $\{(1,1,),(0,2),(-1,1)\}$ of allowed steps; therefore by
  collecting those pairs with $e\neq e'$ and chaining them together, we get a
  family of paths (cycles are of course impossible due to strict monotonicity
  of the second coordinate) that has all the stated properties.

  Conversely let a family of paths as described in the proposition be given.
  For any black square $b\in B$ of $S$, its left edge~$e$ belongs to $E\union
  I$; if $e$ is in $I$ but not on any path for the family, then the white
  square~$w$ to the left of~$e$ is in~$W$ and $(w,b)$ will form a domino of
  the tiling; otherwise $b$ will form a domino with the white square to the
  left of the edge~$e'$ reached from~$e$ by one forward step on the path
  passing through it. Similarly the right edge~$e'$ of any square $w\in W$
  belongs to $I\union X$, and $w$ is paired either with the black square the
  right of $e'$ if $e'\in I$ is not on any path for the family, or otherwise
  with the black square to the right of the edge~$e$ reached by going one step
  back along the path passing through~$e'$. Clearly this attribution of
  squares is reciprocal, so one obtains a partition of $S=B\union W$ into
  dominoes. The maps from domino tilings to path families and vice versa are
  inverses of  each other, by inspection of the definitions.
\end{proof}

We note that a similar result can be proved in the same way for lozenge
tilings of a subset of triangles in a triangular tiling of the plane, and
leads to a bijection with families of disjoint paths in which only two basic
steps are allowed.

To apply this proposition to obtain the correspondence between domino tilings
of the Aztec diamond of order~$n-1$ and disjoint Schr\"oder $n$-families, it
suffices to apply a linear transformation with matrix
$\frac12\smallmat1{-1}1{\phantom-1}$ to the paths, so as to map their basic
steps respectively to $(0,1)$, $(-1,1)$ and $(-1,1)$, and then shift them to
match the required starting and ending points. A small proviso must be made
for the path $P_0$ with $0$ steps (our proposition cannot produce such paths
due to $E\thru X=\emptyset$): we simply add this path in the proper place, on
the edge that sticks out beyond the two squares at a corner of the Aztec
diamond, which edge may be thought of as part of the configuration even though
neither of the squares it separates belong to the Aztec diamond.

The proposition allows us to understand the qualitative difference between the
problems of tiling the Aztec diamond and the augmented Aztec diamond: the
latter (if properly positioned) gives rise in the path setting to a situation
where there is just a single entry and a single exit, whereas for the Aztec
diamond of order~$n$ there are $n$ entries and $n$ exits.
\begin{figure}
  \includegraphics[width=\textwidth]{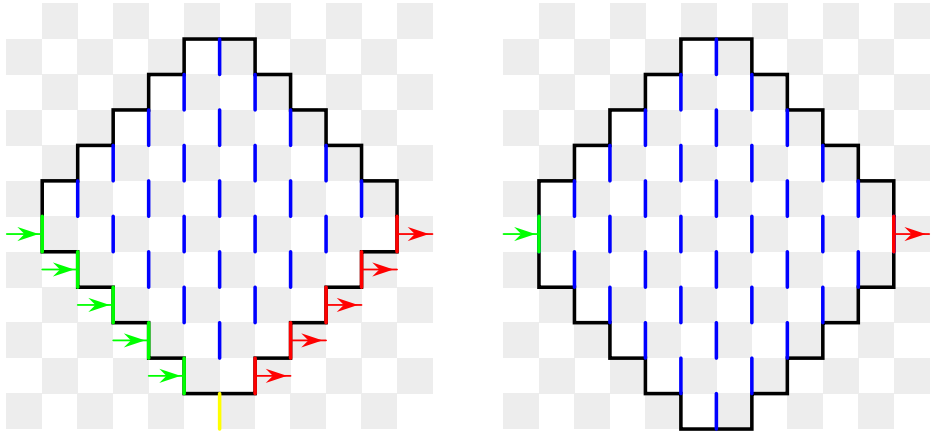}
  \caption{Aztec and augmented diamonds; entries and exits}
  \label{compare-Aztecs}
\end{figure}
This is illustrated in figure~\ref{compare-Aztecs}.

From the point of view of domino tilings, the choice to focus on vertical
edges between squares with a black square on their right is an arbitrary one
among four similar possibilities. This means that with one domino tiling one
can associate four different disjoint path families by making different
choices, adapting the direction of the basic steps in paths, as is illustrated
in figure~\ref{4fam}.
\begin{figure}
  \includegraphics[width=0.75\textwidth]{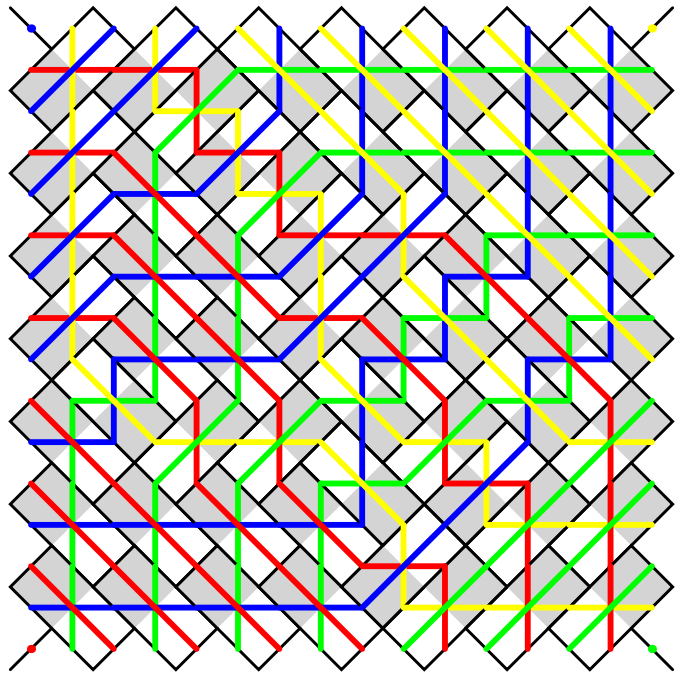}
  \caption{Four families of disjoint paths for a single tiling}
  \label{4fam}
\end{figure}
Note that the duality illustrated in figure~\ref{duality} just expresses the
relation between two of these disjoint path families associated to the same
domino tiling, those using the two possibilities with (after transformation)
SW--NE running edges.

In the introduction we mentioned ``domino shuffling'' as a previously known
method of constructing a domino tiling of the Aztec diamond of order~$n$ using
a sequence of $\frac{n(n+1)}2$ bits as input, in an invertible manner. In this
aspect our algorithm is similar to domino shuffling, but a closer comparison
show that the methods are nevertheless quite different.

In domino shuffling a tiling is obtained by constructing tilings Aztec
diamonds of increasing order until the desired order is attained. In passing
from one order to the next, a first step is to remove information from the
configuration (a number of ``bad blocks'' of two dominoes each are removed),
then the remaining dominoes are shifted by a fixed rule in the direction of
one of the corners, those corners themselves moving outwards so as to enlarge
the diamond, and finally the resulting open space is filled with a choice of
``good blocks'' of two dominoes each. The shifting rule simply looks at the
type of the domino as is apparent in figure~\ref{4fam}, and moves the domino
towards the corner which (in our figure) contains a domino of the same type;
good and bad blocks are pairs of dominoes that form a $2\times 2$ square with
a dark respectively light square in the leftmost corner (again in our figure).
Each such block has one of two possible tilings and therefore represents one
bit of information, so the net information that is added in the expansion from
order $i-1$ to order $i$ is the difference between the number of good blocks
added and the number of bad blocks removed; this amounts to~$i$ fresh bits,
independently of those individual numbers.

Although the information contained in the bad blocks can be recycled when
inserting good blocks, this repeated partial deconstruction/reconstruction
gives a certain irregularity of operation to domino shuffling that is an
essential aspect of it. The removal cannot be avoided, because the dominoes of
the bad block would get in the way of the others. The method is based on a
representation of tilings by a pair of alternating sign matrices which differ
by~$1$ in size, and each of which severely restricts the possibilities for
choosing the other; the removal of the bad blocks corresponds to forgetting
the smaller of these matrices while keeping the other, and the insertion of
good blocks to choosing a new alternating sign matrix one size larger than the
one that was kept, forming a new pair. Thus one works oneself up to ever
larger pairs, making sure to keep one matrix, and thereby the major part of
the accumulated information, intact at all times.

Our algorithm is completely different. For one thing, the possibly
intersecting families of lattice paths it operates upon do not correspond to
domino tilings at all. When part of the path family has been made disjoint,
this can be translated into an incomplete tiling with irregular border, but
while the integration of a new path into this part is done by a ``sweep''
iterating a procedure that is simple to describe in terms of paths, the
description of the corresponding ``ripple'' that modifies and extends the
incomplete tiling does not appear to be very easy. Finally, our procedure
treats paths by decreasing size, and so uses its bits grouped
$n,n-1,\ldots,1$, which is the opposite order as used in domino-shuffling.
This seems to be an essential aspect of our procedure; we cannot see how it
(or a variant) could be used to expand a disjoint $n$-family to a disjoint
$n+1$-family by integrating a new cliff-shaped path with $n$ bits of fresh
information.

We conclude by telling how our algorithm was found, which happened without
realising at first any connection with the Aztec diamond theorem. It started
with a question~\cite{Math.SE} posed on the online form Math.StackExchange. It
asked for an explanation of the nice evaluation of a the determinant of a
matrix with entries defined by a recurrence relation, a slight generalisation
of the matrix~$A_{[n]}$ of Delannoy numbers of section~2. One of the answers
given (by ``Grigory~M'') proposed a combinatorial explanation in terms of
counting families of non-intersecting lattice paths, but failed to complete
the argument showing that this enumeration was given by the proposed formula.
The second author, having came across this question and incomplete answer, was
also unable to find a combinatorial argument, but discussed the problem with
the first author. In this discussion various ideas were attempted, but no
bijective proof was found. The first author eventually came up with the
algorithm leading to bijection presented in this paper, but the details were
never discussed to the point of convincing the second author. several months
later that the second author learned, through a discussion with Christian
Krattenthaler, that this lattice path enumeration was known to be equivalent
to the well known Aztec diamond theorem, but that no quite satisfactory
bijective proof of it was known. This led to renewed interest and discussion
between the authors, in the course of which the details of the algorithm were
made sufficiently clear to be implemented in a computer program. This
dispelled any doubt about the validity of the method, and eventually led to
writing the current paper.


\begin{thebibliography}{EKLP92}

\newcommand\tit[1]{``#1'',}
\newcommand\journ[3]{\textit{#1}, \textbf{#2} (#3),}
\newcommand\publ[2]{\textit{#1}, #2,}

  \bibitem[Ciu96]{Ciucu}
  Mihai Ciucu, \tit{Perfect matchings of cellular graphs}
  \journ{J. of Algebraic Combinatorics}{5}{1996}
  pp. 87--103.

  \bibitem[EKLP92]{EKLP}
  Noam~Elkies, Greg~Kuperberg, Michael~Larsen, James~Propp,
  \tit{Alternating-sign matrices and domino tilings (parts I and II)}
  \journ{J. of Algebraic Combinatorics}{1}{1992}
  pp. 111--132 and 219--234.

  \bibitem[EuFu05]{EuFu}
  Sen-Peng Eu, Tung-Shan Fu,
  \tit{A simple proof of the Aztec diamond theorem}
  \journ{Electr. J. of Combinatorics}{12}{2005} \#R18.

  \bibitem[JPS98]{arctic}
   William~Jockusch, James~Propp, Peter~Shor,
   \tit{Random domino tilings and the arctic circle theorem}
   arXiv:math/9801068v1 [math.CO] (1998)

  \bibitem[vLee10]{lattice walks}
  Marc A. A. van Leeuwen, \tit{Some simple bijections involving lattice walks
    and ballot sequences}
  arXiv:1010.4847 [math.CO] (2010)

  \bibitem[Stan99]{EC2}
   Richard P. Stanley, \tit{Enumerative Combinatorics, Volume~2}
   \publ{Cambridge University Press}{1999}
   Cambridge Studies in Advanced Mathematics~62

  \bibitem[SaZe94]{SaZe}
   Horst Sachs, Holger Zernitz, \tit{Remark on the dimer problem}
   \journ{Discrete Applied Mathematics}{51}{1994}
  pp. 171--179.

  \bibitem[M.SE11]{Math.SE}
  \texttt{http://math.stackexchange.com/questions/86244/}

  \bibitem[prog]{site-Marc}
  \texttt{http://www-math.univ-poitiers.fr/\char`\~maavl/programs/}

\end{thebibliography}
\end{document}